\newtheorem{theorem}{Theorem}[section]
\newtheorem{corollary}[theorem]{Corollary}
\newtheorem{lemma}[theorem]{Lemma}
\newtheorem{proposition}[theorem]{Proposition}
\theoremstyle{definition}
\theoremstyle{remark}
\newtheorem{remark}[theorem]{Remark}
\newcommand{\be}{\begin{equation}}
\newcommand{\ee}{\end{equation}}
\numberwithin{equation}{section}
\newcommand{\R}{\mathbb R}
\newcommand{\N}{\mathbb N}
\newcommand{\Z}{\mathbb Z}
\newcommand{\T}{\mathbb T}
\newcommand{\co}{\colon}
\newcommand{\ep}{\varepsilon}
\newcommand{\pd}{\partial}
\newcommand{\const}{\operatorname{const}}
\newcommand{\bbp}{\mathbf{\bar p}}
\newcommand{\bbq}{\mathbf{\bar q}}
\newcommand{\bbc}{\mathbf{\bar c}}
\newcommand{\bbz}{\mathbf{\bar 0}}
\newcommand{\Vol}{\operatorname{Vol}}
\title{Flexibility of sections of nearly integrable Hamiltonian systems}
\author{Dmitri Burago, Dong Chen and Sergei Ivanov}
\address{Dmitri Burago: Pennsylvania State University, Department of Mathematics, University Park, PA 16802, USA}
\email{burago@math.psu.edu}
\address{Dong Chen: Pennsylvania State University, Department of Mathematics, University Park, PA 16802, USA}
\email{dxc360@psu.edu}
\address{Sergei Ivanov: St.Petersburg Department of Steklov Mathematical Institute, Russian Academy of Sciences, Fontanka 27, St.Petersburg 191023, Russia,  \&\newline
Saint Petersburg State University, 7/9 Universitetskaya emb., St. Petersburg 199034, Russia}
\email{svivanov@pdmi.ras.ru}
\date{}
\subjclass[2010]{37A35, 37J40, 53C60}
\keywords{metric entropy non-expansive maps, KAM theory, Finsler metric, dual lens map, Hamiltonian flow, perturbation}
\thanks{The first author was partially supported
by NSF grant DMS-1205597. The second author was partially supported by Dmitri Burago's Department research fund 42844-1001. The third author was partially supported by
RFBR grant  20-01-00070}
\begin{document}

\begin{abstract}
Given any symplectomorphism on $D^{2n} (n\geq 1)$ which is $C^{\infty}$ close to the identity, and any completely integrable Hamiltonian system $\Phi^t_H$ in the proper dimension,  we construct a $C^{\infty}$ perturbation of $H$ such that the resulting Hamiltonian flow contains a ``local
Poincar\'{e} section'' that ``realizes'' the symplectomorphism.  As a (motivating) 
application, we show that there are arbitrarily small perturbations of any completely integrable Hamiltonian system which are entropy non-expansive (and, in particular, exhibit hyperbolic behavior on a set of positive measure).  We use \textcolor{black}{some results in Berger-Turaev \cite{BT2}},
though in higher dimensions we could simply apply a construction from \cite{BI}.  
\end{abstract}

\maketitle

\section{Introduction}

We presume that a potential reader interested in this paper is familiar with such notions 
as symplectic manifolds and Lagrangian subspaces, Hamiltonian vector fields and flows, 
Poisson 
and Lie brackets, first integrals, Lyapunov exponents, metric and topological entropy, 
integrable systems, and has a basic idea of the main concept of KAM Theory. We
refresh some of these notions below, to set up notations, and  briefly discuss a few more 
technical aspects of KAM Theory. Open questions are at the end of the paper.

We work on  a  symplectic manifold $(\Omega^{2n},\omega_0)$ with $n\geq 2$. A Hamiltonian 
system (flow) with $n$ degrees of freedom is called \textit{completely integrable} if it enjoys $n$ 
algebraically independent first integrals which pair-wisely Poisson commute.

In a loose formulation, the main goal of this paper  is as follows. We are given a symplectic 
transformation of a ball and a completely integrable flow (in appropriate dimension). We assume 
that the transformation is close to the identity. We show that the Hamiltonian admits $C^\infty$ 
perturbations which are arbitrarily $C^\infty$ small and such that each perturbed flow contains,
in a natural sense,  a Poincar\'{e} section on which the return map is, after a proper rescaling and iterations, the given transformation \textcolor{black}{(the precise statement is a bit technical,  see Proposition \ref{perturb all levels} for more details).  Moreover, the section is taken near some periodic orbit of the unperturbed flow.  When the unperturbed Hamiltonian flow is the geodesic flow on the standard 
$\mathbb S^n$ ($n\geq 4$),  a ``dual lens map technique" has been recently developed  and used  in \cite{BI} to construct desired perturbations of Finsler metrics.  The ideas grew from Inverse and Boundary Rigidity Problems. 
The proof of our main result,  Proposition \ref{perturb all levels},  is based on a far-extended dual lens map technique.  In  some sense, in this paper the dual lens map argument 
is applied to Lagrangian submanifolds in a symplectic manifold rather than to geodesics in a Finsler manifold.}

\textcolor{black}{We begin with explaining an application that motivated this work. } According 
to the Liouville-Arnold Theorem (a precise statement can be found in \cite{KH}, p.227), except 
for a  zero measure set, 
the phase space of a completely integrable system 
with compact common level sets of the integrals
is foliated by 
invariant tori, called the {\it Liouville Tori}. The motion on each of these tori is conjugate to a linear flow on a standard torus. These invariant tori are in fact common level sets of the angle variables in the action-angle coordinates.

If one perturbs the Hamiltonian function of a completely integrable system, the resulting 
Hamiltonian flow is called \textit{nearly integrable}. 
Once the unperturbed system is nondegenerate 
in a suitable sense,
 the celebrated Kolmogorov-Arnold-Moser(KAM) Theorem \cite{Ar1}\cite{K}\cite{M} shows 
 that under a $C^{\infty}$ perturbation (though the smoothness can be lower depending on the degree of freedom), a large measure of invariant tori (called \textit{KAM tori}) survive and the dynamics 
 on these tori is still quasi-periodic. ``A large measure" means that the measure of the 
 tori which do not survive goes to zero as the size of the perturbation diminishes (the concrete
 estimates are of no importance for us here).  
 The tori which survive have rotation vectors whose directions are ``sufficiently irrational" (a certain degree 
 of being {\it Diophantine}, the precise condition is a bit technical and of no importance 
 for this paper). 
 
 The dynamic outside KAM tori draws a lot of attention.  Arnold \cite{Ar2} (in a number of papers followed by papers by Douady \cite{Do} and others) gave examples
of what is now known as the Arnold diffusion:  there may be trajectories asymptotic to one invariant torus at one end and then asymptotic to another torus on the other end.  Furthermore,  there might be trajectories which spend a lot of time near one torus, then leave and spend even longer time very close to another
one and so on. This sort of hyperbolicity is, however, very slow. We know this by the double-exponential
estimates on the transition time due to Nekhoroshev \cite{Ne}.   

An interesting but relatively easy 
(by modern standards, though quite important at its time) question is whether 
topological entropy could become positive. This means the presence of some hyperbolic 
dynamics there.  
Newhouse \cite{N1} proved that a $C^2$ generic Hamiltonian flow contains a hyperbolic set
(a horseshoe), hence the flow has positive topological entropy.  The Riemannian geodesic flows versions of this result were later established by Knieper and 
Weiss \cite{KW} for surfaces and Contreras \cite{C} for higher dimensions.

Positivity of topological entropy is nowadays not so exciting: it can (and often does) 
live on a set of zero measure. To get positive topological entropy, it suffices to find one Poincar\'{e}-Smale 
horseshoe (even of zero measure).  Little was known about \textit{metric entropy}, i.e. the measure theoretic 
entropy with respect to the Liouville measure on a level set (or to the symplectic volume on the entire space). Positive metric entropy implies positive 
topological entropy, but not vice versa \cite{BT1}. Despite the strong interest in nearly integrable 
Hamiltonian systems, what was lacking is understanding whether these systems may admit positive metric entropy. 

In this paper we give a positive answer to this question by constructing specific perturbations near any 
Liouville torus:

\begin{theorem}\label{thm2}
Let $\Phi^t_H$ be a completely integrable Hamiltonian flow on 
a symplectic manifold $\Omega=(\Omega^{2n},\omega)$ with $n\geq 2$. 
For any Liouville torus $\mathcal T\subset\Omega$,
one can find a $C^{\infty}$-small perturbation $\widetilde{H}$ of $H$ 
such that the resulting Hamiltonian flow $\Phi^t_{\widetilde{H}}$ 
has positive metric entropy.
Furthermore, such perturbation can be made in an arbitrarily small neighborhood
of $\mathcal T$ and such that the flow is entropy non-expansive. If $\Omega$ is compact,
there are perturbations with positive metric entropy with respect to $\omega^n$ on the whole $\Omega$.
\end{theorem}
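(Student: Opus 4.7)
The plan is to reduce the statement to Proposition~\ref{perturb all levels} combined with the existence of appropriate ``building block'' symplectomorphisms on a disk. First, I would introduce action--angle coordinates $(I,\varphi)$ in a neighborhood of $\mathcal{T}$ and pick a Liouville torus $\mathcal{T}'$ arbitrarily close to $\mathcal{T}$ with rational frequency vector, so that $\mathcal{T}'$ is foliated by closed orbits of a common period $T_0$. Fix one such periodic orbit $\gamma\subset\mathcal{T}'$; the Poincar\'e section of $\Phi^t_H$ transverse to $\gamma$ inside the energy level $\{H=H(\gamma)\}$ is identified with a symplectic disk of dimension $2n-2$ on which the unperturbed return map is the identity.

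Second, by Berger--Turaev~\cite{BT2} (when $2n-2=2$) or the dual lens map construction of~\cite{BI} (when $2n-2\ge 4$), there exists a compactly supported, $C^\infty$-small symplectomorphism $f\co D^{2n-2}\to D^{2n-2}$ with positive metric entropy that is entropy non-expansive. Applying Proposition~\ref{perturb all levels} to this $f$ yields a $C^\infty$-small perturbation $\widetilde H$ of $H$, supported in an arbitrarily small neighborhood of $\gamma$ (hence of $\mathcal{T}$), whose flow has a local Poincar\'e section on which the return map realizes $f$ up to symplectic rescaling and a fixed iterate. Since these operations preserve positivity of metric entropy, the return map carries a positive-entropy invariant measure, absolutely continuous with respect to Lebesgue on the section.

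Third, I transfer this to the flow: in a tubular neighborhood of $\gamma$ the flow $\Phi^t_{\widetilde H}$ is a smooth local suspension over the return map, so the Abramov formula supplies an invariant measure with positive metric entropy that is absolutely continuous with respect to the Liouville measure on the perturbed energy level $\{\widetilde H=H(\gamma)\}$. Hence this level carries positive Liouville metric entropy. The entropy non-expansive property transfers from $f$ to $\Phi^t_{\widetilde H}$: replacing $f$ by a nearby symplectomorphism $g$ with definitely larger entropy and re-running Proposition~\ref{perturb all levels} produces a $C^\infty$-close Hamiltonian whose flow, again via Abramov, has definitely larger metric entropy.

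Finally, if $\Omega$ is compact, the same construction performed on a positive-measure family of nearby Liouville tori (exploiting that Proposition~\ref{perturb all levels} acts on ``all levels'' simultaneously) yields positive Liouville entropy on a positive-measure set of energy surfaces; integrating via the Rokhlin formula along the disintegration of $\omega^n$ by the energy foliation then gives $h_{\omega^n}(\Phi^t_{\widetilde H})>0$. The main obstacle is ensuring that the positive-entropy set has definite Liouville measure on each level rather than sitting in a meager cross-sectional disk, which is precisely what the ``all levels'' aspect of Proposition~\ref{perturb all levels} is designed to provide, and which requires careful control of how the symplectic embeddings of the section disks interact with the action--angle foliation.
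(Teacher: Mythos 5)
There is a genuine gap at the heart of your second step. You assert that the unperturbed return map on the transverse disk inside the energy level is the identity, and that one can therefore just feed a positive-entropy disk symplectomorphism $f$ into Proposition~\ref{perturb all levels}. Neither point holds as stated. First, near a resonant torus the return map in action--angle-induced coordinates is a twist, $G_h(\bbq,\bbp)=(\bbq+\nabla f_h(\bbp),\bbp)$; it is the identity only on the trace of the periodic torus itself (where $\nabla f_h$ vanishes), not on a $(2n-2)$-dimensional disk. If you simply compose this twist with $f$ supported on a tiny ball, the iterates of the composition do not reduce to iterates of $f$ on that ball, so positivity of metric entropy does not follow. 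This is exactly why the paper first creates a ``periodic spot'': using the nondegeneracy of the Hessian of $H(\mathbf p)$, the Morse(--Bott) Lemma and the Carath\'eodory--Jacobi--Lie theorem, it perturbs the generating Hamiltonian $F_h$ to $F_{h,\ep,\delta}$ so that on an invariant neighborhood the time-$1$ map becomes exactly $N$-periodic, and only then inserts the Berger--Turaev map $\theta$ on a ball $B$ whose first $N$ iterates are disjoint, so that $(\widetilde G_h^N)|_B=\theta|_B$. Your proposal omits this step entirely, and without it the entropy claim for the return map is unjustified.

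Second, Proposition~\ref{perturb all levels} does not take a disk map $f$ on one level as input: it realizes a \emph{given} perturbation $\widetilde R$ of the full Poincar\'e map between $(2n-1)$-dimensional sections, required to preserve $H$ and to be symplectic on every slice $\Sigma_0^h$, as the Poincar\'e map of a perturbed Hamiltonian. The construction of such an $\widetilde R$ with positive entropy on a \emph{positive Lebesgue measure set of levels} $h$ (needed by Proposition~\ref{p: entropy via slices}; a single level is measure zero and contributes nothing to the metric entropy of the flow) is the content of Lemma~\ref{l:perturb section}, and it is where the real work lies: the periodic-spot perturbation must be carried out smoothly in $h$ (Morse--Bott Lemma, $h$-dependent symplectic coordinates), and the inserted disk maps must form a smooth family $\theta_h$ interpolating to the identity (Proposition~\ref{p:family}). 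Your appeal to ``the `all levels' aspect of Proposition~\ref{perturb all levels}'' conflates realizing $\widetilde R$ with constructing it. Finally, your argument for entropy non-expansiveness (replacing $f$ by a map of ``definitely larger entropy'') does not address the actual definition; in the paper this property comes from inserting perturbations on a sequence of disks shrinking to the periodic point, so that positive entropy is generated in arbitrarily small tubular neighborhoods of one orbit.
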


{\it Remark 1}.  \textcolor{black}{Here a flow $\Phi^t$ is called \textit{entropy non-expansive} if the positive metric entropy can be generated in an arbitrarily small tubular neighborhood of an orbit. This issue attracts a lot of interest, see for instance \cite{B}\cite{N2}.  In particular, the first author \cite{Bu}  introduced this notion in 1988 being in mathematical isolation in the former Soviet Union.
 This situation is a bit counter-intuitive since hyperbolic dynamics tends to expand and occupy all space. \textcolor{black}{In fact, when Bowen defined entropy expansiveness in \cite{B}, he confessed he knew no example of entropy non-expansive diﬀeomorphism of a compact manifold.}  In our situation, however, the positive metric entropy can be obtained even  near any periodic orbit. }

{\it Remark 2}. 
The theorem holds for Hamiltonian (Lagrangian) flows that are geodesic flows on the co-tangent bundle of
Finsler manifolds. Furthermore, the perturbation can be made in the class of Finsler metrics. We leave to the
reader checking this. For the Riemannian metrics, however, this remains an open problem.  

{\it Remark 3}. Theorem \ref{thm2} almost answers perhaps the most intriguing question in the KAM theory.
The question, which is probably due to A. Kolmogorov, asks if arbitrarily small perturbations of a completely integrable system
may result in dynamics with positive metric entropy. Our construction, as an additional bonus, gives examples
which are entropy non-expansive, thus resolving another well-known problem. We say, however, 
``{\it almost answers}''
 since the construction is rather special. We do not know what happens for generic 
perturbations, and this probably is a quite difficult and important question.\\

By the dual lens map technique,  one can make a $C^{\infty}$ small Lagrangian perturbation of  the geodesic flow on the standard $\mathbb S^n (n\geq 4)$  to get positive (though extremely small due to \cite{N2}) metric entropy and the resulting flow is entropy non-expansive \cite{BI}. Together with the Maupertuis principle,  the second author managed to obtain positive metric entropy by perturbing the geodesic flow on an Euclidean $\mathbb{T}^n$ ($n\geq 3$)  \cite{Ch}. Unlike the case of spheres, the geodesic flows on flat tori are KAM-nondegenerate. 
Therefore in view of  KAM theory, the construction in \cite{Ch} is an improvement of that in \cite{BI}. With this result we know that in some region in the complement of KAM tori, the dynamics of a nearly integrable Hamiltonian flow can be quite stochastic on a set of positive measure. On the other hand, unlike the construction in \cite{BI}, the perturbed flows in \cite{Ch} are entropy expansive.

There are drastic differences between examples in \cite{BI}  \cite{Ch} and the one in this paper. 
Papers  \cite{BI} and  \cite{Ch} only deal with very specific systems: the geodesic flows of the standard metrics on 
$\mathbb S^n$ and $\mathbb T^n$;
\textcolor{black}{the original flow on $\mathbb S^n$
is KAM-degenerate
(and periodic) and the perturbed flow is entropy non-expansive, while those on $\T^n$ are KAM-nondegenerate and entropy expansive,  respectively.} In contrast to that, here
we can work with any completely integrable system, regardless of whether
it is KAM-nondegenerate or not,
and the perturbed flow is entropy non-expansive. 

Also, thanks to the  results in Berger-Turaev \cite{BT2}, our result applies to $n=2$.  In this case,  the 2-dimensional KAM tori separate the 3-dimensional energy level thus 
no Arnol'd diffusion is possible in such systems.  The existence of positive metric entropy between these tori is a little bit surprising. Before \cite{BT2},  we perturbed the identity map by symplectically embedding the geodesic flow in the cotangent bundle of a negatively curved surface into Euclidean space (see \textcolor{black}{\cite[Lemma 5.1]{BI}}), which inevitably required larger dimension.  One of the advantages of our approach,
however, is that we have more flexibility of perturbations than those in  \cite{BT2}. Of course, 
there is an obvious infinite-dimensional space of perturbations obtained by conjugating any one by
a symplectomorphism; here we can, however, make {\it essential} changes by varying the metric of the negatively curved surface we employ. 

The above improvements run into serious difficulties and require new techniques and ideas. The first challenge is how to generate positive metric entropy by perturbing the Poincar\'{e} map $R$. The result in \cite{BT2} allows us to perturb $R$ to get positive metric entropy on one level set, but it is not enough to guarantee positive metric entropy on neighboring level sets since the metric entropy may vanish under tiny perturbation. To overcome this obstruction, we build up very special symplectic coordinates on each level set and then apply the Morse-Bott Lemma to make sure the perturbations on different level sets behave in the same way. A more serious challenge is to realize the perturbed return map $\tilde{R}$ as the Poincar\'{e} map of some Hamiltonian $\tilde{H}$ that is $C^{\infty}$ close to  $H$. 
If $\tilde{R}$ is the time-one map of some ﬂow, one could construct $\tilde{H}$ using the suspension of the generating vector ﬁeld (see e.g. \cite{CHPZ21}).  \textcolor{black}{However, neither the map in Berger-Turaev \cite{BT2},  nor the $\tilde{R}$ in our construction, is generated by a flow.  Our key result, 
Proposition \ref{perturb all levels},  uses Lagrangian submanifolds to perturb a Hamiltonian flow so that a given (symplectic) diffeomorphism becomes the Poincar\'{e}  return map of a section transverse to a periodic trajectory near this trajectory, which is the major novelty in this paper}.

In a nutshell, we have three steps: first, we create a ``periodic spot"(a disc consisting of points with the same period, see \cite{GT10}\cite{GT17}\cite{T10} for related results of interest) in the cross-section; second, we insert a positive-entropy symplectomorphism of the disc there; and third, we extend this to a perturbation of the Hamiltonian.
\textcolor{black}{The first step} is more difficult in higher dimensions and we have
to use  a  seemingly non-trivial construction. 

The paper is organized in the following way: in Section 4, we transform the problem of perturbing the Hamiltonian 
to perturbation of the Poincar\'{e} map.  In Section 5, we  show how to 
get positive metric entropy in a small invariant set by perturbing a family of symplectic maps (including those 
we get in Section 4) and give a proof of Theorem \ref{thm2}. A more detailed plan of proof can be found in Section \ref{plan}.

\section{Acknowledgments}
We are grateful to Leonid Polterovich for his valuable help for boosting our understanding of the technique related to Lagrangian submanifolds. We also thank Vadim Kaloshin, late Anatole Katok, Federico Rodriguez Hertz,  Yakov Sinai,  Pierre Berger, and Dmitry Turaev for useful discussions.  We would like to express our gratitude to the anonymous referee for thoroughly reading our paper, and for numerous very useful remarks, corrections, and suggestions on the improvement of the paper. 

\section{Preliminaries}

\subsection{Hamiltonian flows}
Let $(\Omega^{2n}, \omega_0)$ be a $2n$-dimensional 
symplectic manifold and $H$ a smooth function on $T^*M$. 
The \textit{Hamiltonian vector 
field $X_H$} is defined as the unique solution to the equation
$$\omega_0(X_H, V)=dH(V)$$
for any smooth vector field $V$ on $\Omega$. $X_H$ is well-defined due to non-degeneracy of $\omega_0$. 
The flow $\Phi_H^t$ on $\Omega$ defined by $\dot{\Phi}_H^t=X_H$ is called \textit{the Hamiltonian flow on 
$\Omega$ with Hamiltonian $H$}. One can easily verify that $\Phi_H^t$ preserves $\omega_0$ and hence the 
volume form $\omega^n$. 
Any Hamiltonian flow is locally integrable. 
To be more specific, we have the following generalization of Darboux's theorem 
\cite[Chapter~I, Theorem~17.2]{LM}:

\begin{theorem}[Carath\'{e}odory-Jacobi-Lie]\label{CJL}
Let $(\Omega^{2n}, \omega_0)$ be a symplectic manifold. 
Let a family $p_1,...,p_k$ of $k$ differentiable functions ($k\leq n$), 
which are pairwise Poisson commutative and algebraic independent, 
be defined in the neighborhood $V$ of a point $x\in \Omega$. 
Then there exists $2n-k$ other functions $p_{k+1},...,p_{n},q_{1},...,q_{n}$ 
defined in an open neighborhood $U$ of $x$ in $V$ such that in $U$ we have
$$
\omega_0=\sum_{i=1}^n dq_{i}\wedge dp_i.
$$
\end{theorem}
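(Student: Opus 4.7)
The plan is to prove the theorem by induction on $n$, peeling off one canonical pair $(p_1,q_1)$ at a time and reducing to a symplectic manifold of dimension $2n-2$ carrying one fewer Poisson-commuting function. The base case $n=1$ is trivial (either $k=0$, which is Darboux in dimension $2$, or $k=1$, where one chooses a local transversal to $X_{p_1}$ and defines $q_1$ by $X_{p_1}q_1=1$ together with $q_1=0$ on the transversal).

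For the inductive step, the first task is to construct $q_1$ satisfying $\{p_1,q_1\}=1$ and $\{p_j,q_1\}=0$ for $j=2,\dots,k$. Algebraic independence of $p_1,\dots,p_k$ and nondegeneracy of $\omega_0$ imply that $X_{p_1},\dots,X_{p_k}$ are linearly independent at $x$, and $\{p_i,p_j\}=0$ gives $[X_{p_i},X_{p_j}]=-X_{\{p_i,p_j\}}=0$, so they span an integrable rank-$k$ distribution whose flows commute. I would pick a codimension-$k$ submanifold $\Sigma$ through $x$ transverse to this distribution, set $q_1\equiv 0$ on $\Sigma$, and propagate by requiring $X_{p_1}q_1=1$ and $X_{p_j}q_1=0$ for $j\geq 2$; commutativity of the flows makes this propagation consistent in some neighborhood $U$ of $x$.

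Next I would perform local symplectic reduction by the pair $(p_1,q_1)$. Since $\{p_1,q_1\}=1$, the differentials $dp_1, dq_1$ are linearly independent at $x$, so the joint level sets $\Omega_{a,b}=\{p_1=a,\ q_1=b\}$ form a smooth $(2n-2)$-dimensional foliation of $U$. A standard computation shows $\omega_0$ restricts to a symplectic form on each leaf. The functions $p_2,\dots,p_k$ commute with both $p_1$ and $q_1$, so they are invariant under the flows of $X_{p_1}$ and $X_{q_1}$ and descend to Poisson-commuting, algebraically independent functions on the leaf $\Omega_{p_1(x),0}$ through $x$. Applying the inductive hypothesis inside this leaf yields functions $q_2,\dots,q_n,p_{k+1},\dots,p_n$ on $\Omega_{p_1(x),0}\cap U$ that put $\omega_0|_{\text{leaf}}$ in canonical form. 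I would then extend these functions to a full neighborhood of $x$ in $\Omega$ by declaring them invariant under both $X_{p_1}$ and $X_{q_1}$ (again consistent because these two vector fields commute, since $\{p_1,q_1\}=1$ is constant).

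The final step is to verify that $(p_1,\dots,p_n,q_1,\dots,q_n)$ is a Darboux chart, which amounts to checking that all pairwise Poisson brackets take the canonical values $\{p_i,p_j\}=\{q_i,q_j\}=0$ and $\{p_i,q_j\}=\delta_{ij}$. The brackets involving index $1$ hold by construction of $q_1$ and by the invariance of the later functions under $X_{p_1},X_{q_1}$; the brackets among $p_i,q_j$ for $i,j\geq 2$ transfer from the reduced leaf because both sides are invariant along the pair of transverse Hamiltonian flows and agree on the leaf through $x$. The main obstacle is precisely this last verification: ensuring that $\{q_i,q_j\}=0$ for $i,j\geq 2$ globally on $U$, not just on the chosen leaf. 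The symplectic-reduction framework handles it cleanly, since a Poisson bracket of two functions that are invariant under $X_{p_1}$ and $X_{q_1}$ and that agree with the intrinsic Poisson bracket of the reduced symplectic manifold must vanish throughout $U$ once it vanishes on a transversal leaf — an observation that replaces the otherwise rather delicate direct bookkeeping of brackets.
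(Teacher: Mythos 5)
The paper gives no proof of this statement---it is quoted from Libermann--Marle \cite{LM} (Chapter~I, Theorem~17.2)---and your induction on $n$, constructing a conjugate coordinate $q_1$ from a transversal via the commuting flows of $X_{p_1},\dots,X_{p_k}$ and then reducing to the $(2n-2)$-dimensional joint level sets of $(p_1,q_1)$, is essentially the standard proof of the Carath\'eodory--Jacobi--Lie theorem found there, and it is correct as sketched. The only points worth spelling out are the degenerate case $k=0$ (first pick any $p_1$ with $dp_1(x)\neq 0$) and the closing step that the canonical bracket relations, together with the linear independence of the differentials they imply, force $\omega_0=\sum_{i=1}^n dq_i\wedge dp_i$ in the resulting chart.
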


\begin{corollary}\label{cor1}
For any point $x\in\Omega$ and any Hamiltonian function $H$, one can find an open neighborhood 
$U$ of $x$ and symplectic coordinates $(\mathbf q, \mathbf p)$ in $U$ such that 
$H|_U=p_n$.
\end{corollary}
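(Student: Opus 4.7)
The plan is to deduce the corollary as a direct specialization of Theorem \ref{CJL} applied with $k=1$ to the single function $H$. Since a family consisting of a single function is vacuously pairwise Poisson commutative (there are no pairs to check), the only hypothesis left to verify is algebraic independence, which for a single function at a point amounts to $dH(x)\neq 0$. I will therefore assume $x$ is a regular point of $H$; this is not an additional restriction for the purposes of the paper, since at a critical point $dH(x)=0$ and no symplectic chart can possibly realize $H=p_n$ (as $p_n$ has nonvanishing differential in any symplectic coordinate system).

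Given this, I would apply Theorem \ref{CJL} with $p_1:=H$. It produces a neighborhood $U$ of $x$ and functions $p_2,\ldots,p_n,q_1,\ldots,q_n$ such that
\[
\omega_0=\sum_{i=1}^n dq_i\wedge dp_i \quad \text{on }U,
\]
and by construction $H|_U=p_1$. To obtain the prescribed normalization $H|_U=p_n$ rather than $H|_U=p_1$, I would simply permute the index labels, swapping the conjugate pair $(q_1,p_1)\leftrightarrow (q_n,p_n)$. This permutation is itself a symplectomorphism because $\sum_{i=1}^n dq_i\wedge dp_i$ is invariant under any permutation of the index set $\{1,\ldots,n\}$, so the relabeled coordinates remain symplectic and now satisfy $H|_U=p_n$, as required.

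No substantive obstacle is expected: the whole content of the corollary is a cosmetic relabeling of the conclusion of Theorem \ref{CJL}. The only point to flag carefully in the write-up is the regularity assumption $dH(x)\neq 0$, which is implicit in calling $H$ eligible for the normal form $H=p_n$.
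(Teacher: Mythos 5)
Your proof is correct and is essentially the argument the paper intends: the corollary is a direct specialization of Theorem \ref{CJL} with $k=1$, $p_1=H$, followed by a harmless relabeling of the conjugate pairs, which is exactly how the paper invokes it (e.g.\ in Step 1 of the proof of Proposition \ref{perturb one level}, where $dH(x_0)\neq 0$ is noted). Your caveat that the statement implicitly requires $dH(x)\neq 0$ is also well taken, since the conclusion $H|_U=p_n$ is impossible at a critical point and the paper only ever applies the corollary at regular points.
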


\subsection{Sections and Poincar\'e maps}
\label{sec:poincare}

Throughout the paper $\Omega = (\Omega^{2n},\omega)$
denotes a symplectic manifold, $n\ge 2$,
$H\co\Omega \to \R$ a smooth Hamiltonian,
$X=X_H$ the Hamiltonian vector field of $H$,
and $\{\Phi_H^t\}_{t\in\R}$ the corresponding Hamiltonian flow.

A \textit{section} of the flow $\{\Phi_H^t\}$ is a $(2n-1)$-dimensional
smooth submanifold $\Sigma\subset\Omega$ transverse
to the trajectories of the flow. The transversality means
that $X_H$ is nowhere tangent to~$\Sigma$.
Note that this implies in particular that $X_H$ does not vanish on~$\Sigma$.
A section $\Sigma$ determines
the \textit{Poincar\'e return map} which sends a point $x\in\Sigma$
to the first intersection point
of the trajectory $\{\Phi_H^t(x)\}_{t>0}$ with $\Sigma$.
This is a partially defined map from $\Sigma$ to itself.

We need to consider a more general situation: given two sections
$\Sigma_0$ and $\Sigma_1$ of $\{\Phi_H^t\}$,
the associated \textit{Poincar\'e map} is a partially defined
map $R_H\co \Sigma_0\to\Sigma_1$ defined as follows:
for $x\in\Sigma_0$, $R_H(x)$ is the first intersection point
of the trajectory $\{\Phi_H^t(x)\}_{t>0}$ with $\Sigma_1$.
(If the trajectory does not intersect $\Sigma_1$,
then $R_H(x)$ is undefined).
We denote this map by $R_{H,\Sigma_0,\Sigma_1}$ or by $R_H$
when $\Sigma_0$ and $\Sigma_1$ are clear from context.

Since $\Sigma_0$ and $\Sigma_1$ are transverse to the
trajectories, $R_H$ is defined on an open subset of $\Sigma_0$
and it is a diffeomorphism from this subset to its image in $\Sigma_1$.
In this paper we always choose sections $\Sigma_0$ and $\Sigma_1$ so that
$R_{H,\Sigma_0,\Sigma_1}$ is a diffeomorphism between $\Sigma_0$ and $\Sigma_1$.
This is achieved by replacing $\Sigma_0$ and $\Sigma_1$ by suitable small
neighborhoods of some $x\in\Sigma_0$ and $R_H(x)\in\Sigma_1$.

The Hamiltonian induces a number of structures on sections.
Here is a list of structures and their properties that we
need in this paper.
For a detailed exposition of relations between
flows and their sections, see \cite[\S6.1]{Petersen}.

\subsubsection{Induced measure on sections}
Since the flow $\Phi_H^t$ preserves the canonical symplectic volume on~$\Omega$,
it naturally induces a measure $\Vol_{\Sigma}$ on a section $\Sigma$
as follows: for a Borel measurable $A\subset\Sigma$,
$$
 \Vol_{\Sigma}(A) = \Vol_\Omega \{ \Phi_H^t(x) : x\in A, \ t\in[0,1] \}
$$
where $\Vol_\Omega$ in the right-hand side is the symplectic volume counted with multiplicity.

One easily sees that Poincar\'e maps preserve the induced measure on sections.
Furthermore, from Abramov's formula \cite{Ab2}
one sees that the positivity of metric entropy of a Poincar\'e return map
implies that of the flow:

\begin{proposition}\label{p:entropy via return map}
Let $\Sigma$ be a section such that the Poincar\'e return map
$R_{H,\Sigma,\Sigma}$ is a diffeomorphism and it has positive
metric entropy. Then the flow $\{\Phi_H^t\}$ has positive metric entropy.
\qed
\end{proposition}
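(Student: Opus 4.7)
The plan is a direct application of Abramov's formula \cite{Ab2}, which is exactly what the statement hints at. The idea is that the Hamiltonian flow, restricted to the flow-saturation of the section $\Sigma$, is conjugate to a special (suspension) flow over the Poincar\'e return map $R := R_{H,\Sigma,\Sigma}$, and under this conjugacy the symplectic volume $\Vol_\Omega$ is exactly the product measure that appears on the base side of Abramov's theorem.

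First I would set up the suspension picture. Let $\tau\co\Sigma\to(0,\infty)$ be the smooth first-return time function, which is strictly positive by transversality and globally defined because $R$ is assumed to be a diffeomorphism of $\Sigma$ onto itself. The open set
\[
V \;=\; \{\Phi_H^t(x) : x\in\Sigma,\ 0\le t<\tau(x)\}\ \subset\ \Omega
\]
is $\Phi_H^t$-invariant, and the map $(x,t)\mapsto \Phi_H^t(x)$ identifies $V$ with the abstract suspension space $\Sigma^\tau=\{(x,t): x\in\Sigma,\ 0\le t<\tau(x)\}$, conjugating $\Phi_H^t|_V$ with the special flow built from $(R,\tau)$. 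By the very definition of $\Vol_\Sigma$ recalled immediately above the proposition, this identification sends $\Vol_\Omega|_V$ to the product measure $d\Vol_\Sigma\,dt$.

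Second, to apply Abramov cleanly one needs finite invariant measures, so I would pass to a suitable piece. Since $R$ has positive metric entropy with respect to $\Vol_\Sigma$, select an $R$-invariant Borel subset $A\subset\Sigma$ of positive finite $\Vol_\Sigma$-measure with $h(R|_A,\Vol_\Sigma|_A)>0$ and with $\int_A\tau\,d\Vol_\Sigma<\infty$. This is harmless because $\Sigma$ is $\sigma$-compact and $\tau$ is continuous: one can exhaust $\Sigma$ by compact subsets on which $\tau$ is bounded and isolate an ergodic component of $R$ of positive entropy supported in such a piece. The corresponding flow-saturation $V_A\subset V$ then has finite symplectic volume equal to $\int_A\tau\,d\Vol_\Sigma$.

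Finally, Abramov's formula applied to the special flow on $V_A$ gives
\[
h\bigl(\Phi_H^1\big|_{V_A},\,\Vol_\Omega|_{V_A}\bigr)
\;=\;\frac{h(R|_A,\,\Vol_\Sigma|_A)}{\int_A\tau\,d\Vol_\Sigma}\;>\;0
\]
(after the standard normalization to probability measures), and since positive metric entropy on a flow-invariant subset of positive measure forces positive metric entropy of the full Hamiltonian flow, the proposition follows. The only real nuisance is the $\sigma$-finiteness bookkeeping handled in the previous paragraph; the actual content is Abramov's formula together with the identification of $\Vol_\Omega|_V$ with $d\Vol_\Sigma\,dt$, which is essentially built into the definition of $\Vol_\Sigma$ given just above.
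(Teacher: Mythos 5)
Your argument is correct and is essentially the paper's own (the paper offers no written proof beyond invoking Abramov's formula \cite{Ab2}, which is exactly the route you take): identify the flow on the saturation of $\Sigma$ with the suspension over $R_{H,\Sigma,\Sigma}$ with roof $\tau$, note that $\Vol_\Omega$ corresponds to $d\Vol_\Sigma\,dt$ by the definition of $\Vol_\Sigma$, and apply Abramov. Your extra paragraph on finiteness of $\int\tau\,d\Vol_\Sigma$ is just explicit bookkeeping of what the paper leaves implicit (in the intended application $\Sigma$ is a small section with finite induced volume), so no further comment is needed.
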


\subsubsection{Slicing sections by level sets}
For a section $\Sigma$ and $h\in\R$ we denote by $\Sigma^h$
the $h$-level set of $H|_{\Sigma}$:
$$
 \Sigma^h = \{ x\in \Sigma : H(x) = h \} .
$$
Since $\Sigma$ is transverse to the flow, the differential of $H|_\Sigma$
does not vanish. Hence $\Sigma^h$ is a smooth $(2n-2)$-dimensional submanifold.
Furthermore, one easily checks the following:
\begin{enumerate}
\item
The restriction of the symplectic form $\omega$ to $\Sigma^h$
is non-degenerate. Thus $\Sigma^h$ is a symplectic manifold.
\item
Let $\Sigma_0$, $\Sigma_1$ be two sections
such that the Poincar\'e map
$R_H\co\Sigma_0\to\Sigma_1$ is a diffeomorphism. 
Since the flow preserves $H$, $R_H$ sends $\Sigma_0^h$ to $\Sigma_1^h$.
We denote by $R_H^h$ the restriction $R_H|_{\Sigma_0^h}$.
One easily sees that $R_H^h$ preserves
the symplectic form, hence it is a symplectomorphism between $\Sigma_0^h$
and~$\Sigma_1^h$.
\item
Let $\Vol_{\Sigma^h}$ denote the $(2n-2)$-dimensional symplectic
volume on $\Sigma^h$. A straightforward computation shows that
the $(2n-1)$-dimensional volume $\Vol_\Sigma$ (see above) is
determined by the family $\{\Vol_{\Sigma^h}\}_{h\in\R}$ of symplectic
volumes of level sets:
\be\label{e:sliced volume}
 \Vol_\Sigma(A) = \int_{\R} \Vol_{\Sigma^h}(A\cap\Sigma^h) \,dh
\ee
\end{enumerate}

The identity \eqref{e:sliced volume} and Abramov-Rokhlin
entropy formula \cite{AbR} 
imply that it suffices to obtain positive metric entropy for the
Poincar\'e return map on the slices $\Sigma^h$.
Namely the following holds.

\begin{proposition} \label{p: entropy via slices}
Let $\Sigma$ be a section such that the Poincar\'e return map
$R_H=R_{H,\Sigma,\Sigma}$ is a 
self-diffeomorphism of $\Sigma$.
Suppose that there is a set $\Lambda\subset\R$ of positive Lebesgue measure
such that for every $h\in\Lambda$, the symplectomorphism
$R_H^h\co\Sigma^h\to\Sigma^h$ has positive metric entropy.
Then $R_H$ has positive metric entropy.
\end{proposition}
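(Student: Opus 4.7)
The plan is to view $R_H$ as a skew product over the real line fibered by the level sets $\Sigma^h$, and then apply the Abramov--Rokhlin entropy addition formula exactly as in the proof of Proposition~\ref{p:entropy via return map}, which the authors already cite.

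More precisely, first I would observe that $H$ is a first integral of $R_H$: since the flow $\Phi_H^t$ preserves $H$ and $R_H$ is a return map of this flow, $H \circ R_H = H$ on the domain of $R_H$. Hence the map $\pi = H|_\Sigma \co \Sigma \to \R$ is a measurable factor of the dynamical system $(\Sigma, R_H, \Vol_\Sigma)$, intertwining $R_H$ with the identity on~$\R$. The pushforward measure $\nu = \pi_* \Vol_\Sigma$ is, by the sliced volume formula \eqref{e:sliced volume}, absolutely continuous with respect to Lebesgue measure on $\R$ with density $h \mapsto \Vol_{\Sigma^h}(\Sigma^h)$; and the disintegration of $\Vol_\Sigma$ over $\pi$ is given (up to normalization) by the symplectic volumes $\Vol_{\Sigma^h}$ on the fibers $\pi^{-1}(h) = \Sigma^h$. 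After replacing $\Sigma$ by a precompact piece if necessary so that all measures are finite and we can normalize to probability measures, we are in the standard setting for disintegration and the fiber maps are exactly the $R_H^h$, which are symplectomorphisms by item~(2) of the list preceding \eqref{e:sliced volume} and hence preserve $\Vol_{\Sigma^h}$.

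Next, I would invoke the Abramov--Rokhlin formula \cite{AbR}: for a measure preserving factor $\pi$, the entropy of the total system decomposes as
\be\label{e:AR}
 h_{\Vol_\Sigma}(R_H) \;=\; h_\nu(\operatorname{id}_\R) + \int_\R h_{\Vol_{\Sigma^h}}(R_H^h)\, d\nu(h).
\ee
The base map is the identity on a subset of $\R$, so its entropy vanishes and the first term drops out. By hypothesis the integrand $h_{\Vol_{\Sigma^h}}(R_H^h)$ is strictly positive for every $h\in\Lambda$, and $\Lambda$ has positive Lebesgue measure; since $\nu$ is equivalent to Lebesgue measure on its support (the density $\Vol_{\Sigma^h}(\Sigma^h)$ is positive wherever $\Sigma^h$ is non-empty), $\nu(\Lambda) > 0$. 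Therefore the integral in \eqref{e:AR} is strictly positive, which yields $h_{\Vol_\Sigma}(R_H) > 0$, as required.

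The only real obstacle is the bookkeeping needed to legitimately apply Abramov--Rokhlin: one must verify the measurability of the fiber partition, the countable generation / Lebesgue space hypothesis, and the finiteness of the relevant measures. In this smooth setting these are routine once one restricts to a precompact section, so I would relegate them to a short remark rather than belabor them in the body of the proof.
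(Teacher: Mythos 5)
Your proposal is correct and follows essentially the same route as the paper, which states this proposition as a direct consequence of the sliced volume identity \eqref{e:sliced volume} and the Abramov--Rokhlin formula \cite{AbR}, without spelling out more detail than you do. Your skew-product formulation over the identity on $\R$, with the disintegration of $\Vol_\Sigma$ into the fiber measures $\Vol_{\Sigma^h}$ and the vanishing base entropy, is precisely the intended argument, and the normalization caveats you flag are the same ones the paper leaves implicit.
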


\begin{remark}\label{r:locality}
The above structures depend on both $\Sigma$ and $H$.
The measure $\Vol_\Sigma$ and the level sets $\Sigma^h$
are determined by the restriction of $H$ to an arbitrary
neighborhood of $\Sigma$.
In the proof of Theorem \ref{thm2} we fix suitable sections
$\Sigma_0$ and $\Sigma_1$ and perturb $H$ in a small region
between them.
The perturbation does not change $H$
near $\Sigma_0\cup\Sigma_1$, hence
the volume forms $\Vol_{\Sigma_i}$
on the sections and the splitting of $\Sigma_i$
into symplectic submanifolds $\Sigma_i^h$
remain the same.

However the Poincar\'e map can be affected by
 such perturbations of $H$.
Our plan is to perturb $H$ in such a way that the resulting
Poincar\'e return map on a suitable section $\Sigma$
has an invariant open set where it satisfies the assumptions
of Proposition~\ref{p: entropy via slices}.
\end{remark}

\subsection{Plan of the proof}\label{plan}
Let $\Omega$, $H$, $\mathcal T$ be as in Theorem \ref{thm2}.
First, by a small perturbation of $H$ preserving integrability
and Liouville tori, we make the flow on $\mathcal T$
nonvanishing and periodic. 
Then pick a point $y_0\in\mathcal T$ and choose a small section
$\Sigma$ through $y_0$ such that $y_0$ is a fixed point of
the Poincar\'e return map $R_H=R_{H,\Sigma,\Sigma}$.
Let $\Sigma_0$ be a small neighborhood of $y_0$ in $\Sigma$
such that $R_H|_{\Sigma_0}$ is a diffeomorphism onto its
image $\Sigma_1:=R_H(\Sigma_0)$.
The remaining perturbations of $H$ occur within a tiny
neighborhood of a point $x_0$ lying on the trajectory of $y_0$.
This guarantees that the Poincar\'e map 
$R_{\widetilde H}=R_{\widetilde H,\Sigma_0,\Sigma_1}$
is still a diffeomorphism between $\Sigma_0$ and $\Sigma_1$.

The perturbed Hamiltonian $\widetilde H$ should satisfy
the following conditions:
the Poincar\'e map $R_{\widetilde H}\co\Sigma_0\to\Sigma_1$
has an invariant open set $U\ni y_0$,
and the restriction of $R_{\widetilde H}$ to this invariant set
has a positive metric entropy.
Then, by Proposition \ref{p:entropy via return map} applied
to $U$ in place of $\Sigma$, the flow
$\{\Phi_{\widetilde H}^t\}$ has positive metric entropy.

The construction of $\widetilde H$ is divided into two parts.
The first part, summarized in Lemma \ref{l:perturb section},
is a construction of a perturbed Poincar\'e map
$\widetilde R\co\Sigma_0\to\Sigma_1$ with the
properties desired from the Poincar\'e map $R_{\widetilde H}$.
The second part, described in Section \ref{sec:R to H},
is a construction of a perturbed Hamiltonian $\widetilde H$
which realizes the given $\widetilde R$ as its Poincar\'e map:
$\widetilde R=R_{\widetilde H}$.

In order to be realizable as a Poincar\'e map, the diffeomorphism $\widetilde R$
has to satisfy the natural requirements:
it should map the slices $\Sigma_0^h$ to the respective slices $\Sigma_1^h$,
and it should preserve the symplectic form on the slices.
In fact, in Section \ref{sec:R to H} we show that any sufficiently small
compactly supported perturbation of $R_H$ satisfying these requirements
is realizable as a Poincar\'e map of some perturbed Hamiltonian,
see Proposition \ref{perturb all levels}.

\section{Hamiltonian perturbations with prescribed Poincar\'e maps}
\label{sec:R to H}

In this section we fulfill the last step of the above plan.
We use the notation introduced in Section \ref{sec:poincare}:
$\Omega=(\Omega^{2n},\omega)$ is a symplectic manifold, $n\ge 2$,
$H\co\Omega\to\R$ is a Hamiltonian and $\{\Phi_H^t\}$ is the
corresponding flow,
$\Sigma_0$ and $\Sigma_1$ are sections such that
the Poincar\'e map $R_H\co\Sigma_0\to\Sigma_1$
is a diffeomorphism.
Let $y_0\in\Sigma_0$ and let $x_0$ be a point
on the trajectory $\{\Phi_H^t(y_0)\}$ between $\Sigma_0$ and $\Sigma_1$.

Let $\widetilde R$ be a perturbation of $R_H$ with the same
properties as $R_H$, namely
\begin{enumerate}
 \item $\widetilde R\co\Sigma_0\to\Sigma_1$ is a diffeomorphism;
 \item $\widetilde R$ preserves $H$, that is, $H\circ\widetilde R = H$ on $\Sigma_0$.
 Equivalently, $\widetilde R(\Sigma_0^h)=\Sigma_1^h$ for every $h\in\R$;
 \item the restriction of $\widetilde R$ to each $\Sigma_0^h$
preserves the symplectic form.
\end{enumerate}
We also assume that $\widetilde R$ is $C^\infty$-close to $R_H$
and they coincide outside a small neighborhood of our base point $y_0$. 
Our goal is to realize $\widetilde R$ as a Poincar\'e map of some perturbed
Hamiltonian $\widetilde H$.
Moreover $\widetilde H$ can be chosen $C^\infty$-close to $H$ and such that
$\widetilde H-H$ is supported in a small neighborhood of~$x_0$.
More precisely, we prove the following.

\begin{proposition} \label{perturb all levels}
Let $\Omega$, $H$, $\Sigma_0$, $\Sigma_1$, $y_0$ and $x_0$
be as above.
Then for every neighborhood $U$ of $x_0$ in $\Omega$
there exists a neighborhood $V$ of $y_0$ in $\Sigma_0$
such that,
for every neighborhood $\mathcal H$ of $H$ in $C^\infty(\Omega,\R)$
there exists a neighborhood $\mathcal R$ of $R_H$ in $C^\infty(\Sigma_0,\Sigma_1)$
such that the following holds.

For every $\widetilde R\in \mathcal R$ satisfying $(1)$--$(3)$ above
and such that $\widetilde R=R_H$ outside $V$, there exists $\widetilde H\in \mathcal H$
such that $\widetilde H=H$ outside $U$,
and $\widetilde R = R_{\widetilde H}$ where $R_{\widetilde H}\co\Sigma_0\to\Sigma_1$
is the Poincar\'e map induced by $\widetilde H$.
\end{proposition}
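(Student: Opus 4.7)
The plan is to flatten the Hamiltonian near the trajectory via Darboux coordinates and then encode $\widetilde R$ locally as a perturbation of the form ``flow time $+$ time-dependent generating Hamiltonian''. I first apply Corollary \ref{cor1} to obtain symplectic coordinates $(q_1,\dots,q_n,p_1,\dots,p_n)$ on a neighborhood $W\subset U$ of $x_0$ in which $H=p_n$; then $X_H=\partial/\partial q_n$ and trajectories are straight lines parallel to the $q_n$-axis. Pick $a>0$ small and set $D_0=W\cap\{q_n=-a\}$, $D_1=W\cap\{q_n=a\}$. Let $R^{(0)}=R_{H,\Sigma_0,D_0}$ and $R^{(1)}=R_{H,D_1,\Sigma_1}$; the Poincar\'e map between $D_0$ and $D_1$ under $H$ is the identity in these coordinates. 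Since any perturbation of $H$ supported in the interior of $W$ leaves $R^{(0)}$ and $R^{(1)}$ untouched, one has $R_{\widetilde H}=R^{(1)}\circ\widetilde R_W\circ R^{(0)}$, where $\widetilde R_W\co D_0\to D_1$ is the ``local'' Poincar\'e map of $\widetilde H$. Thus I must realize $\widetilde R_W:=(R^{(1)})^{-1}\circ\widetilde R\circ(R^{(0)})^{-1}$ as a Poincar\'e map between parallel slices of a Hamiltonian perturbation supported in $W$. By shrinking $V\ni y_0$ sufficiently, this $\widetilde R_W$ is compactly supported in the interiors of $D_0,D_1$, $C^\infty$-close to the identity, and inherits properties (2)--(3): it preserves each slice $\{p_n=h\}$ and is symplectic on it.

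To build $\widetilde H$, I use a suspension/generating function construction. On each slice, write the symplectomorphism $\widetilde R_W^h$ of the $(2n-2)$-dim disk in the variables $(q',p')=(q_1,\dots,q_{n-1},p_1,\dots,p_{n-1})$ via a type-II generating function $S_h(q',P')=q'\cdot P'+g_h(q',P')$ with $g_h$ compactly supported, $C^\infty$-small and smooth in $h$. Fix a smooth monotone $\rho\co[-a,a]\to[0,1]$ that equals $0$ and $1$ near the endpoints and is flat to infinite order there, and let $F_s^h$ be the symplectomorphism generated by $q'\cdot P'+\rho(s)g_h(q',P')$; this gives a smooth isotopy with $F_{-a}^h=\mathrm{id}$ and $F_a^h=\widetilde R_W^h$. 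Extract from it the time-dependent Hamiltonian $K_h(q',p',s)$ on the $(q',p')$-plane that generates the isotopy (this is standard: $K_h$ is determined on the support by differentiating $F_s^h$). Finally, put $K(q',p',q_n,p_n):=K_{p_n}(q',p',q_n)$ and $\widetilde H:=p_n+K$ on $W$, glued to $H$ outside. By the vanishing of $\rho$ near $\pm a$, $K$ is compactly supported in a neighborhood of $x_0$ inside $U$, and $\widetilde H\in\mathcal H$ once $\widetilde R$ is close enough to $R_H$.

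It remains to show that the Poincar\'e map between $D_0$ and $D_1$ really is $\widetilde R_W$, and this is the main obstacle. A direct computation from Hamilton's equations for $\widetilde H=p_n+K$ shows that on the level $\{\widetilde H=h\}$, parameterizing trajectories by $\tau=q_n$, the transverse dynamics in $(q',p')$ is generated by a Hamiltonian which at first order equals $K(q',p',\tau,h)$, but contains correction factors $(1+\partial K/\partial p_n)^{-1}$ and is evaluated at the implicitly defined $p_n=h-K$ rather than at the constant value $h$. Thus the induced map on slice $\{p_n=h\}\cap D_0$ equals $F_a^h=\widetilde R_W^h$ only up to second-order errors in $\|K\|_{C^\infty}$. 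I view the assignment $K\mapsto\widetilde R_W^{(K)}$ as a smooth map between Fr\'echet spaces (compactly supported Hamiltonians on $W$ close to $0$, and compactly supported slice-preserving symplectic perturbations of identity $D_0\to D_1$ close to the identity), whose linearization at $K=0$ is precisely the suspension map from time-dependent Hamiltonians on each slice to their time-$2a$ Hamiltonian flows; the generating-function construction of the previous paragraph furnishes a continuous right inverse of this linearization. A Fr\'echet implicit/inverse function theorem (Nash--Moser-type, using that all losses of derivatives are tame in this linear-algebraic setting) then upgrades the approximate solution to an exact one, producing the desired $K$ and hence $\widetilde H$ with $R_{\widetilde H}=\widetilde R$. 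Continuity of this inverse provides the neighborhood $\mathcal R$ of $R_H$ asserted in the proposition.
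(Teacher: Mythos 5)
Your localization to the canonical model ($H=p_n$, straight-line flow, parallel sections) matches the paper's first step, and your diagnosis of the obstruction is accurate: with the ansatz $\widetilde H=p_n+K$, the isoenergetic reduction on $\{\widetilde H=h\}$ is generated by $-P$, where $P$ solves $P+K(q',p',q_n,P)=h$, so the slice map you actually realize agrees with $\widetilde R_W^h$ only to first order in $K$. The genuine gap is that the correction step is asserted rather than proved. Invoking a Nash--Moser/tame implicit function theorem here requires (i) a precise tame Fr\'echet setting for the nonlinear constraint set of slice-preserving, slice-symplectic, compactly supported maps; (ii) tame estimates for $K\mapsto R^{(K)}$ (the claim that ``all losses of derivatives are tame'' is not substantiated); and, crucially, (iii) a tame, uniformly bounded right inverse of the linearization at \emph{all} $K$ in a neighborhood of $0$, not only at $K=0$, which is the one point where you exhibit one. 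One must also verify that the iteration preserves the exact constraints of the statement ($\widetilde H=H$ outside $U$, and $R_{\widetilde H}=R_H$ outside $V$). As written, the argument stops at an approximate solution plus an appeal to heavy machinery whose hypotheses are unchecked, and that is precisely the hard part of the problem.

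Moreover, the machinery is avoidable, and this is where your route diverges from the paper. A hypersurface in a symplectic manifold determines its characteristic foliation, so the Poincar\'e map on a level set depends only on the level set itself (up to time change), not on the Hamiltonian transverse to it. The paper exploits this by prescribing the level sets of $\widetilde H$ directly, as unions of leaves of a foliation by Lagrangian graphs interpolating between the affine leaves at $\Sigma_0$ and their $\widetilde R$-images at $\Sigma_1$, together with the rigidity statement (Lemma \ref{lh:split}) that a slice-symplectomorphism agreeing with $R_H$ outside a compact set is determined by the images of the Lagrangian slices $A_{\mathbf{\widehat p}}$, and the exactness statement (Lemma \ref{l:exact}) needed to glue the corresponding 1-forms. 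In your own framework the same idea works with no iteration: instead of fixing $\widetilde H=p_n+K$, declare the level set $\{\widetilde H=h\}$ to be $\{p_n=h-K_h(q',p',q_n)\}$; its characteristic flow, parametrized by $q_n$, is generated exactly by $K_h$, so the slice Poincar\'e map is exactly $\widetilde R_W^h$ with no second-order error, and what remains is only to check that these hypersurfaces assemble into the level sets of a smooth Hamiltonian that is $C^\infty$-close to $H$ and equal to $H$ outside $U$. That elementary modification (or the paper's Lagrangian-foliation version of it) replaces the Nash--Moser step entirely.
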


In the sequel we assume that the neighborhood $U$
in Proposition \ref{perturb all levels}
is so small that $\overline U\cap(\Sigma_0\cup\Sigma_1)=\emptyset$
where $\overline U$ denotes the closure of~$U$.
This guarantees that
the Hamiltonian remains the same in a neighborhood of $\Sigma_0\cup\Sigma_1$
and therefore the induced structures on $\Sigma_0$ and $\Sigma_1$ are preserved
by the perturbation, see Remark~\ref{r:locality}.

In order to prove Proposition \ref{perturb all levels}, we first prove the following variant 
where we realize $\widetilde R$ 
as a Poincar\'e map only on one level set $H^{-1}(h)$. Here we denote by $R^h_H$ the restriction of $R_H$ on $\Sigma^h_0$.

\begin{proposition} \label{perturb one level}
Let $\Omega$, $H$, $\Sigma_0$, $\Sigma_1$, $y_0$ and $x_0$
be as above, and let $h=H(x_0)$.
Then for every neighborhood $U$ of $x_0$ in $\Omega$
there exists a neighborhood $V^h$ of $y_0$ in $\Sigma^h_0$
such that,
for every neighborhood $\mathcal H$ of $H$ in $C^\infty(\Omega,\R)$
there exists a neighborhood $\mathcal R^h$ of $R_H^h$ in $C^\infty(\Sigma^h_0,\Sigma^h_1)$
such that the following holds.

For every symplectic $\widetilde R^h\in \mathcal R^h$
such that $\widetilde R^h=R^h_H$ outside $ V^h$,
there exists $\widetilde H\in \mathcal H$
such that $\widetilde H=H$ on $H^{-1}(h)\setminus U$
and $\widetilde R^h = R^h_{\widetilde H}$.
\end{proposition}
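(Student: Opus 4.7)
My plan is to straighten the flow by a Darboux chart around $x_0$, represent the desired perturbation of the Poincar\'e map as the time-$1$ map of a Hamiltonian isotopy on a transverse symplectic slice, and then lift this isotopy to a flow-box perturbation of $H$.

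By Corollary~\ref{cor1} I would choose symplectic coordinates $(q_1,\dots,q_n,p_1,\dots,p_n)$ on a neighborhood $W\subset U$ of $x_0$ in which $H=p_n$. After shrinking, $W$ is a flow box of the form $B_\perp\times(a,b)_{q_n}$, disjoint from $\Sigma_0\cup\Sigma_1$. The $H$-flow translates $q_n$ and identifies, symplectically, a small neighborhood of $y_0$ in $\Sigma_0$ with a subset of $B_\perp\times\{a\}$ and a corresponding neighborhood of $R_H(y_0)$ in $\Sigma_1$ with a subset of $B_\perp\times\{b\}$. In these trivializations $R_H^h$ becomes the identity of the $(2n-2)$-dimensional symplectic disk $B_\perp^h:=B_\perp\cap\{p_n=h\}$, so $\widetilde R^h$ corresponds to a symplectomorphism $\psi$ of $B_\perp^h$ that is $C^\infty$-close to the identity and equal to the identity outside a small subset corresponding to $V^h$.

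Because $\psi$ is a compactly supported symplectomorphism of an open ball close to the identity, standard theory produces a time-dependent Hamiltonian $K\co[0,1]\times B_\perp^h\to\R$ with $\psi=\phi_K^1$, supported uniformly in a fixed compact subset of $B_\perp^h$, and with $\|K\|_{C^\infty}$ continuously controlled by $\|\psi-\mathrm{id}\|_{C^\infty}$. A smooth time reparametrization $\tau$ flat at the endpoints replaces $K(t,\cdot)$ by $\tau'(t)K(\tau(t),\cdot)$, making $K$ vanish with all derivatives at $t\in\{0,1\}$ while preserving the other properties. I would then set
\[
  \widetilde H(q,p)=p_n+\frac{1}{b-a}\,K\!\left(\tfrac{q_n-a}{b-a},\,q_1,\dots,q_{n-1},\,p_1,\dots,p_{n-1}\right)
\]
on $W$ and extend by $\widetilde H=H$ outside $W$; the extension is smooth thanks to the endpoint flatness and transverse compact support of $K$, and $\|\widetilde H-H\|_{C^\infty(\Omega)}$ is controlled by $\|K\|_{C^\infty}$. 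A direct Hamilton-equations computation gives $\dot q_n\equiv 1$ along orbits of $\widetilde H$, while $(q_\perp,p_\perp)$ evolves by the Hamiltonian flow of $K$ after the time substitution $t=(q_n-a)/(b-a)$; hence the transverse displacement from $q_n=a$ to $q_n=b$ is exactly $\phi_K^1=\psi$. Since $\widetilde H=p_n$ on $\{q_n=a\}$ and $\{q_n=b\}$ (because $K$ vanishes at $t=0,1$), an orbit starting on $\{p_n=h,q_n=a\}$ arrives on $\{p_n=h,q_n=b\}$; pulling back through the trivializations yields $R^h_{\widetilde H}=\widetilde R^h$. Moreover $\widetilde H=H$ outside $W\subset U$, in particular on $H^{-1}(h)\setminus U$, and $\widetilde H\in\mathcal H$ provided $\mathcal R^h$ is chosen small enough.

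The principal technical point is the Hamiltonian-isotopy step: producing $K$ that is simultaneously compactly supported, flat at $t\in\{0,1\}$, and depends continuously in $C^\infty$ on $\psi$. This is well-known for compactly supported symplectomorphisms of an open ball, either by classical generating-function methods for maps close to the identity or by path-lifting in the identity component of the compactly supported symplectomorphism group; the remainder of the argument is a routine verification.
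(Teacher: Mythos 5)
Your proposal is correct, but it proves Proposition \ref{perturb one level} by a genuinely different route than the paper. The localization to a flow box with $H=p_n$ is the same as the paper's Step 1, but from there the paper never writes the perturbed slice map as a time-one map: it builds an interpolating foliation of the level set by Lagrangian graphs of exact $1$-forms (gluing antiderivatives with a cutoff in $q_n$), defines $\widetilde H$ to be constant on the leaves, and pins down the resulting Poincar\'e map via a uniqueness lemma (Lemma \ref{lh:split}). You instead reduce to a compactly supported symplectomorphism $\psi$ of the $(2n-2)$-dimensional slice that is $C^\infty$-close to the identity, write $\psi=\phi^1_K$ for a compactly supported time-dependent Hamiltonian $K$ with $C^\infty$ control, and suspend $K$ inside the flow box; your Hamilton-equations check ($\dot q_n\equiv 1$, conservation of $\widetilde H$ forcing $p_n=h$ on both faces, transverse motion equal to $\phi^1_K$ after the time substitution) is sound. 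Two caveats are worth recording. First, the isotopy step is exactly the kind of ``folklore'' input the paper deliberately avoids: the paper in fact deduces the closely related family statement (Proposition \ref{p:family}) \emph{from} Proposition \ref{perturb one level}, so you must justify your step independently to avoid circularity --- this is doable and standard for maps $C^1$-close to the identity with compact support, via a generating function of the form $\langle q,P\rangle+s(q,P)$ with $s$ compactly supported, linear interpolation of generating functions, and the fact that $H^1_c(\R^{2n-2})=0$ (since $2n-2\ge 2$) so the generating vector field admits a compactly supported Hamiltonian; the required smallness of $K$ in terms of $\|\psi-\mathrm{id}\|_{C^\infty}$ comes out of these formulas. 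Second, what the paper's heavier Lagrangian-foliation construction buys is the passage to Proposition \ref{perturb all levels}: there the perturbation must be realized on all levels simultaneously, and a naive suspension would require the transverse Hamiltonian to depend on $p_n$, which destroys the exact time-one computation on each level (this is the point of the paper's remark that the relevant maps are not ``generated by a flow''); the foliation picture, by contrast, prescribes the level sets of $\widetilde H$ for every $h$ at once. For the single-level statement you were asked to prove, your suspension argument is complete in outline, with the isotopy lemma being the one step that needs to be written out or precisely cited.
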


\subsection{Proof of Propositions \ref{perturb one level}}

The proof of Proposition
\ref{perturb one level} 
is divided into a number of steps.

\medskip
\textbf{Step 1. Localization.}
In this step we show that it suffices to prove the proposition
in the canonical case where
\begin{itemize}
 \item $\Omega=\R^{2n} = \{ (\mathbf{q},\mathbf{p}) : \mathbf{q},\mathbf{p}\in\R^n \} $,
 with the standard symplectic structure $\omega=\sum_{i=1}^{n} dq_i\wedge dp_i$. 
 \item $H(\mathbf{q},\mathbf{p}) = p_n$.
 \item $x_0$ is the origin of $\R^{2n}$.
 \item $\Sigma_0= \{ (\mathbf{q},\mathbf{p}) : q_n=-1 \} $ and
$\Sigma_1= \{ (\mathbf{q},\mathbf{p}) : q_n=1 \} $.
\end{itemize}

Indeed,
our assumptions imply that $dH(x_0)\ne 0$.
By adding a constant to $H$ we may assume that $H(x_0)=0$.
By Theorem \ref{CJL} there exist a neighborhood $U_0$ of $x_0$ 
and a symplectic coordinate system
$(\mathbf{q},\mathbf{p})$, $\textbf{q}=(q_1,\dots,q_n)$, $\textbf{p}=(p_1,\dots,p_n)$,
in $U_0$ such that $p_n=H|_{U_0}$ and $\mathbf{p}(x_0)=\mathbf{q}(x_0)=0$.
In these coordinates we have $X_H = \frac{\partial}{\partial q_n}$.

Let $\ep_0>0$ be so small that the coordinate cube 
$$
 Q_0 := \{ (\mathbf{q},\mathbf{p}) : |q_i|\le \ep_0 \text{ and } |p_i|\le \ep_0 \text{ for all $i$} \}
$$
is contained in both $U_0$ and the union of the trajectories
between $\Sigma_0$ and $\Sigma_1$.
Let $\Sigma_-$ and $\Sigma_+$ be the opposite open faces of $Q$ defined by
\be\label{e:Sigma_pm}
 \Sigma_\pm =  \{ (\mathbf{q},\mathbf{p}) : q_n=\pm\ep_0, \ |q_i|<\ep_0 \text{ for all $i<n$}, 
  \ |p_i|<\ep_0 \text{ for all $i$}\} .
\ee
It suffices to prove the propositions for $\Sigma_-$ and $\Sigma_+$
in place of $\Sigma_0$ and $\Sigma_1$.

Indeed, the assumptions on $\ep_0$ imply that there are diffeomorphic Poincar\'e maps
$R_-=R_{H,\Sigma_0',\Sigma_-}$ and 
$R_+=R_{H,\Sigma_+,\Sigma_1'}$ where
$\Sigma_0'$ and $\Sigma_1'$ are suitable neighborhoods
of $y_0$ and $R_H(y_0)$ in $\Sigma_0$ and $\Sigma_1$, resp.
In the statements of the Propositions \ref{perturb all levels}
and \ref{perturb one level}
we may assume that $U\subset Q_0$ and require that $V\subset\Sigma_0'$
(resp.\ $V^h\subset\Sigma_0'$).
Then a perturbation $\widetilde H$ of $H$ does not
change the Poincar\'e map outside $\Sigma_0'$,
and it induces a Poincar\'e map
$\widetilde R\co\Sigma_0'\to\Sigma_1'$ if and only if it induces
a a Poincar\'e map $R_+^{-1}\circ\widetilde R\circ R_-^{-1}$ from $\Sigma_-$ to $\Sigma_+$.

Thus we may replace $\Sigma_0$ and $\Sigma_1$ with $\Sigma_-$ and $\Sigma_+$
and assume that $U\subset Q_0$. Using the coordinates to identify $Q_0$
with a cube in $\R^{2n}$ where $\R^{2n}$ is equipped 
with the standard symplectic structure and the standard Cartesian coordinates
$(\mathbf{q},\mathbf{p})=(q_1,\dots,q_n,p_1,\dots,p_n)$.
The hypersurfaces $\Sigma_\pm$ are now subsets of the affine hyperplanes
$\{q_n=\pm\ep_0\}$. By applying the symplectic transformation 
$(\mathbf{q},\mathbf{p})\mapsto (\ep_0^{-1}\mathbf{q},\mathbf{p}\cdot\ep_0)$
and multiplying $H$ by a constant, we make $\Sigma_\pm$ subsets of the
hyperplanes $\{q_n=\pm 1\}$, while $H$ is still the coordinate function $p_n$.
Now we may extend the structures to the whole $\R^{2n}$ and 
reduce the propositions to the canonical case described above.

Throughout the rest of the proof we work (without loss of generality) in this canonical setting.
Recall that the hypersurfaces $\Sigma_i$, $i=0,1$,
are foliated by level sets $\Sigma_i^h$ of~$H$.
In our standardized setting we have $\Sigma_i^h = \{ (\mathbf{q},\textbf{p}) \in\Sigma_i : p_n=h \} $,
so $\Sigma_i^h$ is an $(2n-2)$-dimensional affine subspace.

\medskip
\textbf{Step 2.}
For each $\mathbf{\widehat p}= (\widehat p_1,\dots,\widehat p_n)\in\R^n$,
define
\be \label{e:section slices}
 A_{\mathbf{\widehat p}} = \{ (\textbf{q},\textbf{p})\in \Sigma_0 : \mathbf{p}=\mathbf{\widehat p} \} .
\ee
Each set $A_{\mathbf{\widehat p}}$ is an $(n-1)$-dimensional affine subspace
contained in $\Sigma_0^h$ for $h=\widehat p_n$.
Moreover  $A_{\mathbf{\widehat p}}$ is a Lagrangian submanifold of $\Sigma_0^h$. 
We denote by 
$$\R^n_h=\{(\widehat p_1,\dots,\widehat p_n): \widehat p_n=h\}.$$
A map $\widetilde R^h$ satisfying the requirements of Proposition \ref{perturb one level}
maps the partition $\{A_{\mathbf{\widehat p}}\}_{\mathbf{\widehat p}\in\R_h^n}$
of $\Sigma^h_0$ to a partition of $\Sigma^h_1$ into Lagrangian submanifolds
$\widetilde R(A_{\mathbf{\widehat p}})$.
%
%
The next lemma shows that $\widetilde R^h$ is uniquely determined by
the resulting partition of $\Sigma^h_1$.

\begin{lemma}\label{lh:split}
Let $R^h_1,R^h_2\co\Sigma^h_0\to\Sigma^h_1$ be symplectomorphisms such that $R^h_1=R^h_2$ outside a compact subset of~$\Sigma^h_0$.
Suppose that
$R^h_1(A_{\mathbf{\widehat p}})=R^h_2(A_{\mathbf{\widehat p}})$
for every $\mathbf{\widehat p}\in \R^n_h$.
Then $R^h_1=R^h_2$.
\end{lemma}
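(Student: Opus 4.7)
The plan is to reduce the claim to showing that a single symplectomorphism of $\Sigma^h_0$ is the identity. Set $\psi := (R^h_1)^{-1}\circ R^h_2 \co \Sigma^h_0 \to \Sigma^h_0$. Since $R^h_1$ and $R^h_2$ are symplectic, so is $\psi$; the hypothesis $R^h_1(A_{\mathbf{\widehat p}}) = R^h_2(A_{\mathbf{\widehat p}})$ becomes $\psi(A_{\mathbf{\widehat p}}) = A_{\mathbf{\widehat p}}$ for every $\mathbf{\widehat p} \in \R^n_h$, and the coincidence of $R^h_1$ and $R^h_2$ outside a compact set translates into $\psi = \mathrm{id}$ outside a compact subset of $\Sigma^h_0$. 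Proving $\psi = \mathrm{id}$ is thus equivalent to $R^h_1 = R^h_2$.

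In the coordinates set up in Step~1, $\Sigma^h_0$ is identified with $\R^{2(n-1)}$ equipped with coordinates $(q_1,\dots,q_{n-1},p_1,\dots,p_{n-1})$ and the standard symplectic form $\sum_{i=1}^{n-1} dq_i \wedge dp_i$, and the leaves $A_{\mathbf{\widehat p}}$ are exactly the fibers of the Lagrangian projection $(\mathbf q',\mathbf p') \mapsto \mathbf p'$. Leaf-preservation thus means $\psi(\mathbf q',\mathbf p') = (\phi(\mathbf q',\mathbf p'),\mathbf p')$ for some smooth map $\phi \co \R^{2(n-1)} \to \R^{n-1}$.

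Next I would expand $\psi^* \omega = \omega$ in these coordinates: the identity $\sum_i d\phi_i \wedge dp_i = \sum_i dq_i \wedge dp_i$ forces $\partial_{q_j} \phi_i = \delta_{ij}$ (from the $dq \wedge dp$ terms) and $\partial_{p_j} \phi_i = \partial_{p_i} \phi_j$ (from the $dp \wedge dp$ terms). Integrating the first condition gives $\phi(\mathbf q',\mathbf p') = \mathbf q' + g(\mathbf p')$ for some $g \co \R^{n-1} \to \R^{n-1}$; the second condition says that $\sum_i g_i\, dp_i$ is a closed $1$-form on $\R^{n-1}$, so by the Poincar\'e lemma $g = \nabla G$ for a scalar function $G(\mathbf p')$. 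Hence $\psi$ acts by translating each fiber $A_{\mathbf{\widehat p}}$ by the constant vector $\nabla G(\mathbf p')$.

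Finally, use that $\psi = \mathrm{id}$ outside some compact set $K \subset \R^{2(n-1)}$. For any fixed $\mathbf p'_0 \in \R^{n-1}$, choose $\mathbf q'_0$ of sufficiently large norm that $(\mathbf q'_0,\mathbf p'_0) \notin K$; then $\mathbf q'_0 + \nabla G(\mathbf p'_0) = \mathbf q'_0$ forces $\nabla G(\mathbf p'_0) = 0$. Since $\mathbf p'_0$ was arbitrary, $\nabla G \equiv 0$, whence $\psi = \mathrm{id}$ and $R^h_1 = R^h_2$. The only non-trivial ingredient is the brief normal-form computation showing that fiber-preserving symplectomorphisms of a cotangent bundle act on each fiber by translation by the gradient of a scalar function on the base; once this is in hand, the vanishing argument for $\nabla G$ is immediate.
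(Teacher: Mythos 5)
Your proof is correct and follows essentially the same route as the paper's: reduce to the single symplectomorphism $(R^h_1)^{-1}\circ R^h_2$, use leaf-preservation plus the symplectic condition to show it translates each fiber $A_{\mathbf{\widehat p}}$ by a vector depending only on $\mathbf{\widehat p}$, and kill the translation using coincidence with the identity outside a compact set. The detour through the Poincar\'e lemma ($g=\nabla G$) is unnecessary for the final vanishing argument but harmless.
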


\begin{proof}
Let $f=(R^h_2)^{-1}\circ R^h_1$.
The map $f$ is a symplectomorphism from $\Sigma^h_0$ to itself and it is the identity outside a compact set.
Let $e_1,\dots,e_n,e_{n+1},\dots,e_{2n}$ be the coordinate vectors corresponding to the coordinates $\mathbf q=(q_1,\dots,q_n)$, $\mathbf p=(p_1,\dots,p_n)$ in~$\R^{2n}$.
The affine space $\Sigma^h_0$ is naturally equipped with coordinates $(q_1,\dots,q_{n-1},p_1,\dots,p_{n-1})$.
The assumption that $R^h_1(A_{\mathbf{\widehat p}})=R^h_2(A_{\mathbf{\widehat p}})$
for all $\mathbf{\widehat p}\in \R^n_h$ implies that $f$ preserves the coordinates $p_1,\dots,p_{n-1}$.
Hence for every $(\mathbf q,\mathbf p)\in\Sigma_0$
the partial derivatives of $f$ at $(\mathbf q,\mathbf p)$ have the form
\be\label{e:df/dp}
 \frac{\pd f}{\pd p_j}(\mathbf q,\mathbf p) = e_{n+j} + v_j, \qquad j=1,\dots,n-1,
\ee
and
\be\label{e:df/dq}
 \frac{\pd f}{\pd q_i}(\mathbf q,\mathbf p) = w_i, \qquad i=1,\dots,n-1,
\ee
where $v_j, w_i$ belong to the linear span of $e_1,\dots,e_{n-1}$.

Since $f$ preserves the symplectic form $\omega$
on every slice $\{ p_n = \const \}$ of $\Sigma_0$,
the vectors $v_j$ and $w_i$ from \eqref{e:df/dp} and \eqref{e:df/dq}
satisfy
$$
  \omega(e_{n+j} + v_j, w_i) = \omega(e_{n+j},e_i) = \delta_{ij}
$$
for all $i, j\in\{1,\dots,n-1\}$,
where $\delta_{ij}$ is the Kronecker delta.
Since $v_j$ and $w_i$ are from the linear span of $e_1,\dots,e_{n-1}$,
we have $\omega(v_j,w_i)=0$.
Thus $\omega(e_{n+j},w_i)=\delta_{ij}$ for all $i,j$.
Hence $w_i=e_i$ for all~$i$.
Now \eqref{e:df/dq} takes the form
$$
 \frac{\pd f}{\pd q_i}(\mathbf q,\mathbf p) = e_i, \qquad i=1,\dots,n-1 .
$$
Hence the restriction of $f$ to every subset $\{\mathbf q = \const \}$
is a parallel translation. Since $f$ is the identity outside a compact set,
it follows that $f$ preserves the coordinates $q_1,\dots,q_{n-1}$ and is the identity everywhere.
Hence $R^h_1=R^h_2$.
\end{proof}


We may assume that the set $U$ where we are allowed to change the Hamiltonian
is a cube $(-\ep,\ep)^{2n}$ where $\ep\in(0,1)$.
We prove the statement of 
Proposition \ref{perturb one level} for $V^h\subset\Sigma^h_0$ defined as the projection of $U$ to $\Sigma^h_0$, namely
\be\label{e:Vh definition}
  V^h = (-\ep,\ep)^{n-1}\times\{-1\}\times (-\ep,\ep)^{n-1}\times\{h\} .
\ee

We may assume that the neighborhood $\mathcal H$ of $H$
(see the formulation of the proposition)
is so small that every $\widetilde H\in\mathcal H$
such that $\widetilde H=H$ on $H^{-1}(h)\setminus U$
induces a smooth bijective Poincar\'e map
$R^h_{\widetilde H}\co\Sigma^h_0\to\Sigma^h_1$.
Moreover, $R^h_{\widetilde H}=R^h_H$ outside $V^h$ for every such $\widetilde H$.

With Lemma \ref{lh:split}, 
Proposition \ref{perturb one level} boils down to the following statement:
Given a sufficiently small perturbation $\widetilde R^h$ of $R^h_H$ 
such that  $\widetilde R^h=R^h_H$ outside $V^h$,
we can construct a perturbed Hamiltonian $\widetilde H\in\mathcal H$ such that
$\widetilde H=H$ on $H^{-1}(h)\setminus U$ and
\be
  R^h_{\widetilde H}(A_{\mathbf{\widehat p}})= \widetilde R^h(A_{\mathbf{\widehat p}})
\quad
\text{for all $\mathbf{\widehat p}\in\R^n_h$}.
\ee

\medskip
{\bf Step 3.}
For each $\mathbf{\widehat p}\in\R^n$, define a Lagrangian affine subspace
$L_{\mathbf{\widehat p}}\subset\R^{2n}$ by
$$
L_{\mathbf{\widehat p}} = \{ (\mathbf q, \mathbf{\widehat p}) : q\in\R^n \} .
$$
The subspaces $L_{\mathbf{\widehat p}}$, where $\mathbf{\widehat p}$ ranges over $\R^{n}$,
form a foliation of $\R^{2n}$.
Our plan is to perturb the subfoliation $\{L_{\mathbf{\widehat p}}\}_{\mathbf{\widehat p}\in\R^n_h}$ and obtain another foliation by Lagrangian submanifolds $\{\widetilde L_{\mathbf{\widehat p}}\}_{\mathbf{\widehat p}\in\R^n_h}$
such that
\be\label{e:L-left}
 \widetilde L_{\mathbf{\widehat p}} \cap\Sigma^h_0 = A_{\mathbf{\widehat p}}
\ee
and 
\be\label{e:L-right}
\widetilde L_{\mathbf{\widehat p}} \cap\Sigma^h_1 = \widetilde R^h(A_{\mathbf{\widehat p}})
\ee
for all $\mathbf{\widehat p}\in\R_h^n$, and define the perturbed Hamiltonian $\widetilde H$
so that it is constant on each submanifold $\widetilde L_{\mathbf{\widehat p}}$. Since the flow (without fixed points) on a level set of a Hamiltonian is determined by this set up to a time change, once we fix the level set $\widetilde H^{-1}(h)$,  the resulting Poincar\'e map on $\Sigma^h_0$ is independent of other level sets.

The next lemma says that this construction solves our problem.
We say that a line segment $[x,y]\subset\R^{2n}$ is \textit{horizontal} if
it is parallel to the coordinate axis of the $q_n$-coordinate.

\begin{lemma}\label{l:leaves}
Let $\{\widetilde L_{\mathbf{\widehat p}}\}_{\mathbf{\widehat p}\in\R_h^n}$ be a foliation
 by Lagrangian submanifolds satisfying \eqref{e:L-left} and \eqref{e:L-right}.
Let $\widetilde H\co\R^{2n}\to\R$ be a smooth function such that
\be\label{e:H on fibers}
 \widetilde H|_{\widetilde L_{\mathbf{\widehat p}}} = h \quad\text{for all \ $\mathbf{\widehat p}\in\R_h^n$}
\ee
and suppose that $\widetilde H$ defines a smooth Poincar\'e map $R^h_{\widetilde H}\co\Sigma^h_0\to\Sigma^h_1$.

Suppose in addition that every horizontal segment intersecting $\Sigma^h_0\cup\Sigma^h_1$ 
but not intersecting $U=(-\ep,\ep)^{2n}$
is contained in one of the submanifolds $\widetilde L_{\mathbf{\widehat p}}$. Then
$R^h_{\widetilde H} = \widetilde R^h$
and
$ \widetilde H=H $
on $H^{-1}(h)\setminus U$.
\end{lemma}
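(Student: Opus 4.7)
The plan is to prove the two conclusions separately, both resting on the standard observation that any Lagrangian submanifold of $\R^{2n}$ contained in a regular level set of $\widetilde H$ is automatically invariant under $\Phi^t_{\widetilde H}$. Indeed, for any $v\in T\widetilde L_{\mathbf{\widehat p}}$, the constancy \eqref{e:H on fibers} of $\widetilde H$ along the leaf gives $\omega(X_{\widetilde H},v)=d\widetilde H(v)=0$, so $X_{\widetilde H}$ lies in the symplectic orthogonal $(T\widetilde L_{\mathbf{\widehat p}})^{\omega}=T\widetilde L_{\mathbf{\widehat p}}$. Hence every trajectory of $\Phi^t_{\widetilde H}$ starting in $\widetilde L_{\mathbf{\widehat p}}$ stays in the leaf, and the Poincar\'e map $R^h_{\widetilde H}$ carries $\widetilde L_{\mathbf{\widehat p}}\cap\Sigma_0^h$ onto $\widetilde L_{\mathbf{\widehat p}}\cap\Sigma_1^h$.

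Combining this with \eqref{e:L-left} and \eqref{e:L-right} yields $R^h_{\widetilde H}(A_{\mathbf{\widehat p}})=\widetilde R^h(A_{\mathbf{\widehat p}})$ for every $\mathbf{\widehat p}\in\R^n_h$. Both $R^h_{\widetilde H}$ and $\widetilde R^h$ are symplectomorphisms $\Sigma_0^h\to\Sigma_1^h$ that coincide with $R^h_H$ outside the bounded set $V^h$: for $\widetilde R^h$ this is built into the hypothesis, and for $R^h_{\widetilde H}$ it follows because trajectories starting in $\Sigma_0^h\setminus V^h$ have transverse coordinates outside $(-\ep,\ep)^{2n-2}$ and therefore never enter $U$, where the perturbation $\widetilde H-H$ is supported. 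Lemma \ref{lh:split} then gives $R^h_{\widetilde H}=\widetilde R^h$.

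For the identity $\widetilde H=H$ on $H^{-1}(h)\setminus U$, I need to show $\widetilde H(x)=h$ for every $x=(\mathbf{q}_*,\mathbf{p}_*)$ with $p_{*,n}=h$ and $x\notin U=(-\ep,\ep)^{2n}$. The strategy is to exhibit a horizontal segment through $x$ that meets $\Sigma_0^h\cup\Sigma_1^h$ and avoids $U$; the hypothesis then places the segment (and hence $x$) inside some leaf $\widetilde L_{\mathbf{\widehat p}}$ and \eqref{e:H on fibers} gives $\widetilde H(x)=h$. If some coordinate of $x$ other than $q_{*,n}$ has absolute value $\geq\ep$, the entire horizontal line through $x$ already misses $U$ and any segment on it reaching $q_n=\pm 1$ works. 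Otherwise the only failing coordinate must be $q_{*,n}$, and, assuming $q_{*,n}\geq\ep$, the segment from $x$ to $(q_{*,1},\dots,q_{*,n-1},1,\mathbf{p}_*)\in\Sigma_1^h$ lies entirely in $\{q_n\geq\ep\}$ and so avoids $U$; the case $q_{*,n}\leq-\ep$ is symmetric.

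The main subtlety is this last case distinction: when the horizontal line through $x$ does intersect $U$, one has to exit toward the \emph{far} section relative to $x$, using crucially that $x$ lies outside the cube in the $q_n$ direction. Beyond this geometric observation the proof is essentially formal, and the conceptual takeaway is that Lagrangian-in-level-set plus transversality forces the leaves of the foliation to function as invariant tubes of the flow, reducing the realization of $\widetilde R^h$ to the construction of the right foliation carried out in the subsequent steps.
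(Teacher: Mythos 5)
Your route is essentially the paper's: Lagrangian leaves lying in a level set of $\widetilde H$ are invariant under $\Phi^t_{\widetilde H}$, hence $R^h_{\widetilde H}(A_{\mathbf{\widehat p}})=\widetilde R^h(A_{\mathbf{\widehat p}})$ by \eqref{e:L-left}--\eqref{e:L-right}; the horizontal-segment hypothesis gives $\widetilde H=H$ on $H^{-1}(h)\setminus U$ (your case analysis is just an explicit version of the paper's convexity argument); and Lemma \ref{lh:split} finishes the first conclusion. However, one step is unsupported as written: you justify that $R^h_{\widetilde H}=R^h_H$ outside $V^h$ by invoking that ``the perturbation $\widetilde H-H$ is supported in $U$''. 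This is not among the hypotheses of Lemma \ref{l:leaves}: the lemma only assumes \eqref{e:H on fibers}, the existence of a Poincar\'e map, and the horizontal-segment condition. Equality of $\widetilde H$ and $H$ away from $U$ is part of the \emph{conclusion}, and even then only on the single level set $H^{-1}(h)$ (which is also all that Proposition \ref{perturb one level} asserts, so in the intended application you cannot assume global support in $U$ either). As it stands, this makes the step circular.

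The repair is exactly the ordering the paper uses, and you already have the needed ingredient in your last paragraph: first prove $\widetilde H=H$ on $H^{-1}(h)\setminus U$. Then for $x\in\Sigma_0^h\setminus V^h$ the horizontal segment from $x$ to $\Sigma_1$ stays in $H^{-1}(h)\setminus U$, where the hypersurfaces $H^{-1}(h)$ and $\widetilde H^{-1}(h)$ coincide; since the Hamiltonian flow on a fixed-point-free level hypersurface is determined by that hypersurface up to time change (both $X_H$ and $X_{\widetilde H}$ span its characteristic line field), the $\widetilde H$-trajectory of $x$ is this same horizontal segment, so $R^h_{\widetilde H}=R^h_H=\widetilde R^h$ outside a compact set, and Lemma \ref{lh:split} applies as you intended. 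With that reordering your proof coincides with the paper's.
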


\begin{proof}
The key implication of the assumption \eqref{e:H on fibers} is that
every submanifold $\widetilde L_{\mathbf{\widehat p}}$ is contained in
one level set of~$\widetilde H$.

First we show that $\widetilde H=H$ on $H^{-1}(h)\setminus U$. Recall that $H=p_n$
and $\widetilde R(\Sigma^h_0)=\Sigma^h_1$.
This and \eqref{e:L-left}, \eqref{e:L-right}, \eqref{e:H on fibers}
imply that $\widetilde H=H$ on $\Sigma^h_0\cup\Sigma^h_1$.
Since $U$ is a convex set lying between the hyperplanes $\Sigma_0$ and $\Sigma_1$,
every point $x\in H^{-1}(h)\setminus U$ can be connected to a point $y\in\Sigma^h_0\cup\Sigma^h_1$
by a horizontal segment not intersecting $U$. By the assumptions of the lemma
the segment $[x,y]$ is contained in one level set of $\widetilde H$,
and it is contained in a level set of $H$ since $H=p_n$.
Therefore $\widetilde H(x)=H(x)$ for all $x\in H^{-1}(h)\setminus U$.

Now we show that $R^h_{\widetilde H} = \widetilde R^h$.
Fix $\mathbf{\widehat p}\in\R_h^n$ and consider the leaf $\widetilde L_{\mathbf{\widehat p}}$
of our foliation.
By elementary linear algebra, the property that
$\widetilde L_{\mathbf{\widehat p}}$ is contained in
a level set of~$\widetilde H$ implies that the Hamiltonian vector field $X_{\widetilde H}$
is tangent to $\widetilde L_{\mathbf{\widehat p}}$.
Therefore $\widetilde L_{\mathbf{\widehat p}}$ is invariant under the flow $\Phi^t_{\widetilde H}$.
By \eqref{e:L-left} and \eqref{e:L-right} it follows that
\be\label{e:R(A)}
 R^h_{\widetilde H}(A_{\mathbf{\widehat p}}) = L_{\mathbf{\widehat p}} \cap \Sigma^h_1
 = \widetilde R^h(A_{\mathbf{\widehat p}}) .
\ee
The fact that $\widetilde H=H$ on $H^{-1}(h)\setminus U$ implies that $R^h_{\widetilde H}=R^h_H=\widetilde R^h$
outside a compact set.
Now with \eqref{e:R(A)} at hand we can apply Lemma \ref{lh:split} to $R^h_{\widetilde H}$ and $\widetilde R^h$
in place of $R^h_1$ and $R^h_2$ and conclude that $R^h_{\widetilde H}=\widetilde R^h$.
\end{proof}

It remains to construct a foliation $\{\widetilde L_{\mathbf{\widehat p}}\}$ satisfying Lemma \ref{l:leaves}
and such that the resulting Hamiltonian $\widetilde H$ is sufficiently close to $H$ in $C^\infty$.
This is achieved in the next two steps.

\medskip
{\bf Step 4.}
We begin with a construction of $\widetilde L_{\mathbf{\widehat p}}$ for a fixed $\mathbf{\widehat p}\in\R_h^n$.
Throughout this step, $\widehat p_i$ denotes a fixed real number
(the $i$th coordinate of $\mathbf{\widehat p}$)
and $p_i$, $q_i$ are still coordinate functions,
$i=1,\dots,n$.

We identify $\R^{2n}$ with the cotangent bundle $T^*\R^n$ using $q_i$'s as spatial coordinates
and $p_i$'s as coordinates in the fibers of the cotangent bundle.
We construct the desired leaf $\widetilde L_{\mathbf{\widehat p}}$ as a graph of a closed 1-form
$\widetilde\alpha=\widetilde\alpha_{\mathbf{\widehat p}}$ on $\R^n$.
Recall that a graph of a 1-form is a Lagrangian submanifold of the cotangent bundle
if and only if the 1-form is closed, see e.g. \cite{CdS}.

Define $W=(-\ep,\ep)^n$.
The last requirement of Lemma \ref{l:leaves} prescribes $\{\widetilde L_{\mathbf{\widehat p}}\}$
and hence $\widetilde\alpha$ outside $W$.
We cover $\R^{2n}$ by two open half-spaces $\{q_n<\ep\}$ and $\{q_n>-\ep\}$
and consider $\widetilde\alpha$ on these half-spaces with $W$ removed.
First consider a 1-form
$\alpha = \sum \widehat p_i\,dq_i$
(with constant coefficients) and define
the restriction of $\widetilde\alpha$ to $\{q_n<\ep\}\setminus W$ by
\be\label{e:tilde alpha left}
\widetilde\alpha=\alpha \qquad \text{on $\{q_n<\ep\}\setminus W$} .
\ee
The graph of $\alpha$ is the unperturbed leaf $L_{\mathbf{\widehat p}}$.
It satisfies \eqref{e:L-left} and consists of horizontal segments,
fulfilling the respective part of requirements of Lemma \ref{l:leaves}.

Now we define $\widetilde\alpha$ on the set $\{q_n>-\ep\}\setminus W$.
In fact,  $\widetilde\alpha$ on this set is uniquely determined 
by the map $\widetilde R$ and the requirement~\eqref{e:L-right}.
Consider the set $\widetilde A:=\widetilde R^h(A_{\mathbf{\widehat p}})$,
the desired intersection of the graph of $\widetilde\alpha$ with $\Sigma^h_1$.
Since $\widetilde R$ preserves $H$ and the symplectic form in the levels of $H$,
$\widetilde A$ is an $(n-1)$-dimensional Lagrangian submanifold of the affine subspace
$\Sigma^h_1$.
In the unperturbed case $\widetilde R^h=R^h_H$,  $\widetilde A$ 
is an affine subspace of $\Sigma^h_1$.
Hence, if $\widetilde R^h$ is sufficiently close to $R^h_H$ in $C^\infty$,
then $\widetilde A$ 
is a graph of a closed 1-form $\widetilde\beta$ 
defined on the hyperplane $\{q_n=1\}\subset\R^n$.
We define the restriction of $\widetilde\alpha$ to $\{q_n>-\ep\}\setminus W$
by 
\be\label{e:tilde alpha right}
\widetilde\alpha=\Pi^*\widetilde\beta + \widehat p_n\,dq_n  \qquad \text{on $\{q_n>-\ep\}\setminus W$} ,
\ee
where $\Pi$ is the orthogonal projection from $\R^n$ to the hyperplane $\{q_n=1\}$.
The graph of the 1-form defined by \eqref{e:tilde alpha right} consists of horizontal segments
and satisfies \eqref{e:L-right}.
Since $\widetilde R^h=R^h_H$ outside $V$ (see \eqref{e:Vh definition}),
the definitions \eqref{e:tilde alpha left} and \eqref{e:tilde alpha right}
agree on the common domain $\{-\ep<q_n<\ep\}\setminus W$.

Thus we have defined the desired 1-form $\widetilde\alpha$ on $\R^n\setminus W$.
Our goal is to extend $\widetilde\alpha$ to the whole $\R^n$.
We need the following lemma.

\begin{lemma}\label{l:exact}
$\widetilde\alpha$ defined above is exact on $\R^n\setminus W$.
\end{lemma}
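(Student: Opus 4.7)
The plan is to construct an explicit primitive $F$ of $\widetilde\alpha$ by gluing two primitives defined on the open sets $U_-=\{q_n<\ep\}$ and $U_+=\{q_n>-\ep\}\setminus W$, whose union is $\R^n\setminus W$. On $U_-$ the form $\widetilde\alpha=\alpha$ has the obvious primitive $f_1=\sum_{i=1}^n\widehat p_i q_i$. For $U_+$, since $\R^{n-1}$ is contractible and $\widetilde\beta$ is closed (its graph $\widetilde R^h(A_{\mathbf{\widehat p}})$ is Lagrangian in $\Sigma_1^h$), I write $\widetilde\beta=dG$ for some $G\co\R^{n-1}\to\R$ and take $f_2(q_1,\ldots,q_n)=G(q_1,\ldots,q_{n-1})+\widehat p_n q_n$ as a primitive of $\widetilde\alpha=\Pi^*\widetilde\beta+\widehat p_n\,dq_n$ on $U_+$. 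The whole point is to choose $G$ so that $f_1$ and $f_2$ coincide on the overlap $\{-\ep<q_n<\ep\}\setminus W$.

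On that overlap, the first $n-1$ coordinates lie in $\R^{n-1}\setminus(-\ep,\ep)^{n-1}$. Since $\widetilde R^h=R^h_H$ outside $V^h$, on this set $\widetilde\beta$ coincides with $\alpha_0:=\sum_{i=1}^{n-1}\widehat p_i\,dq_i$, so that $\widetilde\beta-\alpha_0$ is a compactly supported closed $1$-form on $\R^{n-1}$. I want to represent it as $dG_0$ with $G_0$ \textit{also} compactly supported and then set $G=\sum_{i=1}^{n-1}\widehat p_i q_i+G_0$. If such a $G_0$ exists then its support is automatically contained in $[-\ep,\ep]^{n-1}$ (outside this cube $G_0$ is locally constant on each connected component, and those constants must vanish at infinity), so $f_1-f_2=-G_0(q_1,\ldots,q_{n-1})\equiv 0$ on the overlap and the two primitives glue to give the required $F$.

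The existence of a compactly supported $G_0$ is obstructed by the class of $\widetilde\beta-\alpha_0$ in $H^1_c(\R^{n-1})$. For $n\geq 3$, Poincar\'e duality yields $H^1_c(\R^{n-1})\cong H^{n-2}(\R^{n-1})=0$, so there is no obstruction. For $n=2$ the obstruction reduces to the single number $I:=\int_\R(\widetilde\beta-\widehat p_1\,dq_1)$, and showing $I=0$ is the main difficulty. I will deduce this from the area-preservation of $\widetilde R^h$: after identifying $\Sigma_0^h$ with $\Sigma_1^h$ via $R^h_H$, the map $\widetilde R^h$ becomes a compactly supported area-preserving diffeomorphism of $\R^2$ carrying the horizontal line $A_{\mathbf{\widehat p}}=\{p_1=\widehat p_1\}$ to the graph of $\widetilde\beta$. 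Applying conservation of area to the part of a sufficiently large square $[-R,R]^2$ lying above $A_{\mathbf{\widehat p}}$ forces $\int_{-R}^{R}\widetilde\beta_1(q_1)\,dq_1=2R\widehat p_1$, and since the integrand equals $\widehat p_1$ outside $[-\ep,\ep]$ this gives $I=0$. With this in hand, the gluing argument completes the proof uniformly for all $n\geq 2$.
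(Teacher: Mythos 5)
Your proof is correct, and its only genuinely delicate point --- the case $n=2$ --- rests on exactly the same fact as the paper's proof: the map $\widetilde R^h\circ(R^h_H)^{-1}$ is a symplectomorphism of $\Sigma^h_1$ equal to the identity outside $(-\ep,\ep)^2$, so it preserves the (signed) area between the graph of the $1$-form and a horizontal line, which forces the period $\int(\widetilde\beta-\widehat p_1\,dq_1)$ to vanish; the paper phrases this as the equality $\int_s\widetilde\alpha=\int_s\alpha$ over the top side of $[-1,1]^2$, while you compute the area of the image of a large rectangle bounded by the line --- the same argument in different clothing. Where you differ is in the easy case and in the overall packaging: the paper simply notes that for $n>2$ the closed form $\widetilde\alpha$ is exact because $\R^n\setminus W$ is simply connected, whereas you build an explicit primitive by gluing antiderivatives on the two half-spaces, using that the compactly supported closed form $\widetilde\beta-\alpha_0$ on $\R^{n-1}$ has a primitive vanishing outside $[-\ep,\ep]^{n-1}$ (trivially for $n\ge 3$ since the complement of the cube is connected, i.e.\ $H^1_c(\R^{n-1})=0$; obstructed for $n=2$ by the single period you kill with the area argument). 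Your route is somewhat heavier than needed for the statement itself, but it buys something the paper only arranges afterwards: a primitive assembled from antiderivatives $f$ and $g$ of the two defining formulas that actually agree outside the slab $(-\ep,\ep)^{n-1}\times\R$, which is precisely what Step 4 of the paper needs before interpolating with the cutoff $\mu$. Two cosmetic points: your chart $U_-$ should be $\{q_n<\ep\}\setminus W$ (the primitive $f_1$ is of course defined everywhere, but the identity $df_1=\widetilde\alpha$ only holds off $W$), and in the $n=2$ computation one should note, as the paper does when defining $\widetilde\beta$, that $\widetilde R^h(A_{\mathbf{\widehat p}})$ is a graph staying inside the large square, which holds once $\widetilde R^h$ is close enough to $R^h_H$ and $R$ is large.
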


\begin{proof}
The statement is trivial if $n>2$, since the 1-form is closed and 
the set $\R^n\setminus W$ is simply connected.

For $n=2$, it suffices to check that the integral of $\widetilde\alpha$
over any one cycle going around the hole $W=(-\ep,\ep)^2$, is zero.
We do this for the boundary of the square $[-1,1]^2$. Let $s$ be the side
$[-1,1]\times\{1\}$ of this square.
Since $\widetilde\alpha=\alpha$ on the remaining three sides of the square,
it suffices to verify that $\int_s\widetilde\alpha=\int_s\alpha$.
Each integral is the signed area between the graph of the respective 1-form
and the line $\{p_1=0\}$ in the plane $\Sigma^h_1$
with coordinates $(q_1,p_1)$. Since one graph is taken to the other
by a symplectomophism $\widetilde R^h\circ (R^h_H)^{-1}$ which is the identity
outside the small square $(-\ep,\ep)^2$, this signed area is preserved.
\end{proof}

The right-hand sides of \eqref{e:tilde alpha left} and \eqref{e:tilde alpha right}
are closed 1-forms defined on the entire $\R^n$.
Let $f\co\R^n\to\R$ and $g\co\R^n\to\R$ be their antiderivatives.
To ensure that $f$ and $g$ depend smoothly on the parameter $\mathbf{\widehat p}$,
we choose them so that $g(0,\dots,0,-1)=f(0,\dots,0,-1)=0$.
Observe $f=g$ outside the set $(-\ep,\ep)^{n-1}\times\R$
since \eqref{e:tilde alpha left} and \eqref{e:tilde alpha right} agree there.

We combine $f$ and $g$ using a suitable partition of unity as follows.
Fix a smooth function $\mu\co\R\to[0,1]$ such that
$\mu(t)=1$ for all $t<-1/2$ and $\mu(t)=0$ for all $t>1/2$
and define a smooth function $\widetilde f\co\R^n\to\R$ by
$$
 \widetilde f(\mathbf q) = \mu(q_n/\ep)\cdot f(\mathbf q) + (1-\mu(q_n/\ep))\cdot g(\mathbf q) .
$$
Now define
$$
 \widetilde\alpha = d\widetilde f
$$
everywhere on $\R^n$.
This definition agrees with \eqref{e:tilde alpha left} and \eqref{e:tilde alpha right}
on their respective domains since $f$ and $g$ agree on the set $\{-\ep<q_n<\ep\}\setminus W$.

This finishes the construction of the 1-form $\widetilde\alpha=\widetilde\alpha_{\mathbf{\widehat p}}$
for a fixed $\widehat p$. The graph $\widetilde L_{\mathbf{\widehat p}}$ of $\widetilde\alpha_{\mathbf{\widehat p}}$
is a Lagrangian submanifold satisfying the requirements of Lemma \ref{l:leaves}.
\medskip

{\bf Step 5.} It remains to show that $\widetilde{H}$ can be chosen to be $C^{\infty}$ close to $H$. In order to prove it we prove the family of Lagrangian submanifolds $\{\widetilde L_{\mathbf{\widehat p}}\}_{\mathbf{\widehat p}\in\R_h^n}$, constructed in the previous step, is a $C^{\infty}$ perturbation of the original foliation $\{L_{\mathbf{\widehat p}}\}_{\mathbf{\widehat p}\in\R_h^n}$.

Going through the constructions of Step~4 one sees that $\widetilde\alpha_{\mathbf{\widehat p}}$
depends smoothly on $\widehat p$.
Hence one can define a smooth map $F^h\co H^{-1}(h)\to\R^{2n}$ by
$$
 F^h(\mathbf q,\mathbf{\widehat p}) = (\mathbf q, \alpha_{\mathbf{\widehat p}}(\mathbf q)) ,
 \qquad \mathbf q\in\R^n,\mathbf{\widehat p}\in \R^n_h.
$$
This map takes each leaf of the original foliation $\{L_{\mathbf{\widehat p}}\}$ to
the corresponding Lagrangian submanifold $\widetilde L_{\mathbf{\widehat p}}$.
Note that $F^h$ is determined by $\widetilde R^h$ by means of explicit formulae
(involving some inverse functions) and in the unperturbed case 
(when $\widetilde R^h=R^h_H$) the resulting map $F^h$ is the identity.
Therefore $F$ is $C^\infty$-close
to the identity on any fixed compact set
as long as $\widetilde R$ is $C^\infty$-close to $R_H$. We may assume that the neighborhood $\mathcal R$ of $R^h_H$
from which $\widetilde R^h$ is chosen (see the formulation of Proposition \ref{perturb one level})
is so small that
\be\label{e:Fh close to id}
 \|F^h-\mbox{id}\|_{C^1([-1,1]^{2n-1}\times\{h\})} < \tfrac12
\ee
where the norm of the first derivative is understood as the operator norm.

The construction in Step 4 implies that $F$ is the identity on the set
$\{q_n<-\ep\}$ and $F^h-\mbox{id}$ is constant along any horizontal segment
not intersecting $U$. This implies that the norm estimate in \eqref{e:Fh close to id}
holds in $C^1(H^{-1}(h))$, and this norm estimate implies that
$F^h$ is a diffeomorphism from $H^{-1}(h)$ to its image.

Thus $\widetilde{H}$ can be chosen to be $C^{\infty}$ close to $H$ and we finish the proof of Proposition \ref{perturb one level}.\\

Proposition \ref{perturb one level} can be applied to prove the following fact which is known in folklore but for which
the authors could not find a reference.

\begin{proposition}\label{p:family}
Let $\varphi_0\co D^{2n}\to D^{2n}$, $n\ge 1$, be a symplectomorphism $C^\infty$-close to the identity
and coinciding with the identity near the boundary. Then there exists a smooth family
of symplectomorphisms $\{\varphi_t\}_{t\in[0,1]}$ of $D^{2n}$
fixing a neighborhood of the boundary
and such that $\varphi_t=\varphi_0$ for all $t\in[0, \frac13]$, $\varphi_t=\operatorname{id}$ for all $t\in[\frac23,1]$,
and the family $\{\varphi_t\}$ is $C^\infty$-close to the trivial family (of identity maps).
\end{proposition}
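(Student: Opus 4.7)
The plan is to apply Proposition~\ref{perturb one level} in an ambient symplectic manifold of two extra dimensions: I realize $\varphi_0$ as a Poincar\'e return map on a level set of a perturbed Hamiltonian there, and then obtain the family $\{\varphi_t\}$ by scaling the perturbation down to zero via a cutoff in $t$.

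Extend $\varphi_0$ by the identity to a symplectomorphism of $\R^{2n}$; since $\varphi_0=\operatorname{id}$ near $\partial D^{2n}$, there is $r<1$ with $\varphi_0=\operatorname{id}$ outside the open disc $B_r\subset\R^{2n}$ of radius~$r$, and we fix $r'\in(r,1)$. Work in $\Omega=\R^{2n+2}$ with the standard symplectic form, coordinates $(q_1,\dots,q_{n+1},p_1,\dots,p_{n+1})$, and Hamiltonian $H=p_{n+1}$; take sections $\Sigma_0=\{q_{n+1}=-1\}$ and $\Sigma_1=\{q_{n+1}=1\}$ with base points $y_0=(0,-1,0,0)$ and $x_0=0$, and identify each slice $\Sigma_i^0=\{q_{n+1}=\pm 1,\, p_{n+1}=0\}$ symplectically with $\R^{2n}$ by forgetting the last coordinate pair, so that $R^0_H=\operatorname{id}$. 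Now apply Proposition~\ref{perturb one level} with $h=0$, $\widetilde R^0:=\varphi_0$, and $U$ a neighborhood of $x_0$ whose projection to the first $2n$ coordinates equals $B_{r'}$ and which is a thin slab in the $(q_{n+1},p_{n+1})$ directions. Since $\varphi_0-\operatorname{id}$ is supported in $B_r\subset B_{r'}$, the hypothesis $\widetilde R^0=R^0_H$ outside $V^0$ is satisfied, producing a Hamiltonian $\widetilde H$ arbitrarily $C^\infty$-close to $H$ with $R^0_{\widetilde H}=\varphi_0$ and $\widetilde H=H$ outside $U$.

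Set $K:=\widetilde H-H$, compactly supported in $U$, and pick $\chi\in C^\infty([0,1],[0,1])$ with $\chi\equiv 1$ on $[0,1/3]$ and $\chi\equiv 0$ on $[2/3,1]$. Define
\[
H_t:=H+\chi(t)K,\qquad \varphi_t:=R^0_{H_t}\co\Sigma_0^0\to\Sigma_1^0,
\]
with $\varphi_t$ viewed, via the identifications, as a symplectomorphism of $\R^{2n}$ and then restricted to $D^{2n}$. For $t\in[0,1/3]$ we have $H_t=\widetilde H$, so $\varphi_t=\varphi_0$; for $t\in[2/3,1]$, $H_t=H$, so $\varphi_t=\operatorname{id}$. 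The family is smooth in $t$ by smooth dependence of Poincar\'e maps on the Hamiltonian, and it is as $C^\infty$-close to the trivial family as desired by taking the neighborhood $\mathcal H$ of $H$ in Proposition~\ref{perturb one level} small. Any trajectory of $H_t$ starting at a point of $\Sigma_0^0$ with $(q,p)\notin B_{r'}$ avoids the support of~$K$ and is therefore an unperturbed translation, so $\varphi_t=\operatorname{id}$ on $\R^{2n}\setminus B_{r'}$; a symplectomorphism of $\R^{2n}$ equal to the identity outside $B_{r'}$ automatically maps $B_{r'}$ (hence $D^{2n}$) to itself by bijectivity, so $\varphi_t$ is a self-symplectomorphism of $D^{2n}$ fixing a neighborhood of~$\partial D^{2n}$.

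The one subtle point is obtaining $\widetilde H=H$ on all of $\Omega\setminus U$, rather than merely on $H^{-1}(0)\setminus U$ as literally stated in Proposition~\ref{perturb one level}. This is a mild strengthening of the construction in Step~4 of the proof of that proposition: the Lagrangian foliation used there is perturbed only inside $U\cap\{p_{n+1}=0\}$, and the Hamiltonian $\widetilde H$ can be extended off the level set $\{\widetilde H=0\}$ to coincide with $H=p_{n+1}$ outside any chosen enlargement of $U$ by a standard smooth cutoff in $p_{n+1}$, without affecting the Poincar\'e map on $\Sigma_0^0$. Everything else is smooth dependence of flows on parameters and the elementary bijectivity argument above.
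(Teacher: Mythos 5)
Your proposal is correct, and its first half is exactly the paper's: embed the problem in $\R^{2n+2}$ with $H=p_{n+1}$, sections $\{q_{n+1}=\pm1\}$, and apply Proposition \ref{perturb one level} at $h=0$ to realize $\varphi_0$ as $R^0_{\widetilde H}$. Where you diverge is in how the family is extracted. The paper normalizes the realized Hamiltonian to an $\mathscr H$ with $\mathscr H-p_{n+1}$ independent of $p_{n+1}$ (so $\partial\mathscr H/\partial p_{n+1}=1$) and reads off a \emph{time-dependent} Hamiltonian $H_t(\bbq,\bbp)=\mathscr H(\bbq,t,\bbp,0)$ on $\R^{2n}$, in which the coordinate $q_{n+1}$ plays the role of time; the family $\{\varphi_t\}$ is then the resulting Hamiltonian isotopy, and the endpoint/support properties are immediate from the compact support of $\mathscr H-p_{n+1}$ in $q_{n+1}$. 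You instead set $K=\widetilde H-H$, interpolate $H_t=H+\chi(t)K$ with a cutoff $\chi$, and take $\varphi_t=R^0_{H_t}$, i.e.\ Poincar\'e maps of a one-parameter family of \emph{autonomous} Hamiltonians. Your ``subtle point'' (arranging $\widetilde H=H$ off the level set outside $U$ by a cutoff in $p_{n+1}$) is precisely the role played by the paper's $\mathscr H$, so both arguments need the same mild supplement to the literal statement of Proposition \ref{perturb one level}, and your fix is legitimate since the Poincar\'e map on $\Sigma_0^0$ depends only on the zero level set. Two small remarks: (i) both your argument and the paper's implicitly use that the neighborhood $V^0$ furnished by Proposition \ref{perturb one level} can be taken to be the projection of $U$ (so that it contains the support of $\varphi_0-\operatorname{id}$); this is clear from that proposition's proof (or can be arranged by conjugating $\varphi_0$ with a dilation, which preserves symplecticity), so it is not a gap relative to the paper's own standard; (ii) you should take the neighborhood $\mathcal H$ convex so that $H+sK\in\mathcal H$ for all $s\in[0,1]$ — trivial. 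The trade-off between the routes: the paper's yields the family as a Hamiltonian isotopy generated by a compactly supported time-dependent Hamiltonian (a structurally stronger output), while yours delivers exactly the stated conclusion and replaces the reparametrization-in-$q_{n+1}$ step by the elementary trajectory-avoidance and bijectivity argument you give for $\varphi_t=\operatorname{id}$ outside $B_{r'}$; both are valid proofs of the proposition as stated.
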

\begin{proof}

Consider $\Omega=\R^{2n+2}=(q_1,\dots,q_{n+1},p_1,\dots,p_{n+1})$
with the standard symplectic structure and Hamiltonian $H=p_{n+1}$.
et $\Sigma_0$ and $\Sigma_1$ be affine hyperplanes
defined by the equations $q_{n+1}=-1$ and $q_{n+1}=1$, resp.

We introduce notation $\bbp$ and $\bbq$ for the coordinate $n$-tuples
$(p_1,\dots,p_n)$ and $(q_1,\dots,q_n)$.
The Poincar\'e map $R_H\co\Sigma_0\to\Sigma_1$ is given by
$$
 R_H(\bbq,-1,\bbp,p_{n+1}) = (\bbq,1,\bbp,p_{n+1}).
$$
We define a perturbed map $\widetilde R^0\co\Sigma^0_0\to\Sigma^0_1$ by
$$
 \widetilde R^0(\bbq,-1,\bbp,0) = (\varphi_{0,q}(\bbq,\bbp), 1, \varphi_{0,p}(\bbq,\bbp), 0)
$$
where $\varphi_{0,q}$ and $\varphi_{0,p}$ are $q$- and $p$- coordinates of $\varphi_0$. By applying Proposition \ref{perturb one level} to $h=0$ we get $\widetilde H$ such that $\widetilde R^0=R^0_{\widetilde H}$. 
Let $\mathscr{H}$ be the Hamiltonian such that $\mathscr{H}^{-1}(0)=\widetilde H^{-1}(0)$ (hence $\widetilde R^0=R^0_{\widetilde H}=R^0_{\mathscr{H}}$), and $\mathscr{H}-p_{n+1}$ does not depend on $p_{n+1}$. Then $\partial\mathscr{H}/\partial p_{n+1}=1$ and the return map $R_{\mathscr{H}}$ is given by
$$
R_{\mathscr{H}}(\bbq,-1,\bbp,p_{n+1}) = (\varphi_{0,q}(\bbq,\bbp), 1, \varphi_{0,p}(\bbq,\bbp), p_{n+1}).
$$
Define a time-dependent Hamiltonian $H_t$ on $\R^{2n}$ as follows:
$$
  H_t(\bbq,\bbp) = \mathscr{H}(\bbq,t,\bbp,0) .
$$
Then the flow on $\R^{2n}$ generated by $H_t$ connects the identity to $\varphi_0$.
\end{proof}

\subsection{Proof of Proposition \ref{perturb all levels}}
We deduce Propositions \ref{perturb all levels} from Proposition \ref{perturb one level} by applying it to all $h\in\R$ and to the corresponding restrictions $\widetilde R|_{\Sigma_0^h}$ in place of $\widetilde R^h$. Let $\widetilde H^h$ denote the resulting perturbed Hamiltonian for a given $h$. Then the desired $\widetilde H$ in Proposition \ref{perturb all levels} is constructed from the family $\{\widetilde H^h\}_{h\in\R}$ in such a way that $\widetilde H^{-1}(h) = (\widetilde H^h)^{-1}(h)$ for every $h\in\R$. The resulting Poincar\'e map is the prescribed $\widetilde R$ since the Hamiltonian flow is determined by level sets up to a time change.

We have only to show that the family of Lagrangian submanifolds $\widetilde L_{\mathbf{\widehat p}}$,
$\widehat p\in\R^n$, constructed in the Step 4 in the proof of Proposition \ref{perturb one level}, forms a foliation of $\R^{2n}$. Similar to Step 5, one can define a smooth map $F\co\R^{2n}\to\R^{2n}$ by
$$
 F(\mathbf q,\mathbf{\widehat p}) = (\mathbf q, \alpha_{\mathbf{\widehat p}}(\mathbf q)) ,
 \qquad \mathbf q,\mathbf{\widehat p}\in\R^n.
$$
$F$ is $C^\infty$-close
to the identity on any fixed compact set
as long as $\widetilde R$ is $C^\infty$-close to $R_H$. Furthermore one can slightly adjust the argument in Step 5 to show that $F$ is a diffeomorphism from $\R^{2n}$ to itself. This finishes the proof of Proposition \ref{perturb all levels}.

\section{Proof of Theorem \ref{thm2}}

We begin with the following result by Berger-Turaev \cite{BT2}.
\begin{theorem}[\cite{BT2}]\label{BT}
For any $n\geq 1$, there is a $C^{\infty}$-small perturbation of the identity map 
$\operatorname{id}\co D^{2n}\rightarrow D^{2n}$ such that the resulting map is symplectic and coincides
with the identity map near the boundary and has positive metric entropy.
\end{theorem}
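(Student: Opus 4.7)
The natural plan is to construct the perturbation first in dimension two (the base case $n=1$) and then upgrade to arbitrary $n$ by a product-and-extension argument. For the higher-dimensional step, starting from a symplectomorphism $\varphi_1\co D^2\to D^2$ with the required properties, one forms $\varphi_1\times\operatorname{id}$ on $D^2\times D^{2n-2}$; this is symplectic, agrees with the identity wherever $\varphi_1$ does, inherits positive metric entropy from $\varphi_1$ (by the product formula for entropy with respect to the product measure), and can be transplanted into a small interior box of $D^{2n}$ with the rest filled in by the identity. The symplectic extension is routine because the map already equals the identity near the interface.

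For the two-dimensional case, the first instinct is to rescale a fixed symplectomorphism $\psi\co D^2\to D^2$ having positive metric entropy, setting $\varphi = S_\delta\circ\psi\circ S_\delta^{-1}$ with $S_\delta\co(q,p)\mapsto(\delta q,\delta p)$. This $\varphi$ is symplectic and supported in a small disc, but its derivative equals that of $\psi$ up to conjugation, hence is not close to the identity. A better idea is to slow the dynamics down in time: realize the perturbation as the time-$1/T$ map, for large $T$, of a compactly supported autonomous Hamiltonian on $\R^2$ whose time-one flow has positive metric entropy. The time-$1/T$ map of a smooth vector field is $O(1/T)$-close to the identity in every $C^k$-norm, and its metric entropy scales as $1/T$, so positivity and smallness are achieved simultaneously by taking $T$ large. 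Compact support can be enforced by cutting off with a generating function supported in the interior, which preserves the symplectic structure.

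This reduces the problem to producing a compactly supported smooth Hamiltonian on $\R^2$ whose flow has positive metric entropy on an invariant set of positive Liouville measure. This is the principal obstacle and the heart of the Berger-Turaev argument. The subtlety is that in dimension two, classical hyperbolic sets obtained from Smale horseshoes or homoclinic tangles typically have zero Lebesgue measure, so positive topological entropy comes essentially for free but positive metric entropy does not. One needs a delicate construction, in the spirit of Katok's smooth realizations or symplectic Newhouse phenomena, that produces a Pesin-style hyperbolic set of positive measure within a small compactly supported flow. I expect the bulk of the work to lie in verifying that this positive-measure hyperbolicity survives the cutoff to compact support and the subsequent slowing-down rescaling, while the Hamiltonian is made $C^\infty$-small.
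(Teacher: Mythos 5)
Your proposal has two genuine gaps. First, the step from $n=1$ to $n\ge 2$ does not work as stated: the product $\varphi_1\times\operatorname{id}$ on $D^2\times D^{2n-2}$ equals the identity only near $\partial D^2\times D^{2n-2}$; near the other boundary face $D^2\times\partial D^{2n-2}$ it still moves the first factor, so it is \emph{not} ``the identity near the interface'' and cannot be transplanted into an interior box and extended by the identity --- the extension would be discontinuous across that face. To fix this one must damp $\varphi_1$ to the identity as the second coordinate approaches the boundary, i.e.\ one needs a smooth family $\{\varphi_t\}$ of symplectomorphisms of $D^2$, equal to $\varphi_1$ for small $t$, equal to $\operatorname{id}$ for $t$ close to $1$, fixing a neighborhood of $\partial D^2$, and, crucially, $C^\infty$-close to the constant identity family so that the resulting map of $D^{2n}$ (in the paper, $\Phi(x,y)=(\varphi_{|y|}(x),y)$) remains a small symplectic perturbation with positive entropy on $\{|y|\le\frac13\}$. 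Producing such a $C^\infty$-small isotopy is not routine; in the paper it is exactly Proposition~\ref{p:family}, whose proof relies on the Hamiltonian-realization machinery of Proposition~\ref{perturb one level}. This is the content your ``routine extension'' elides.

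Second, your reduction of the two-dimensional case is to an object that does not exist: a compactly supported \emph{autonomous} Hamiltonian on $\R^2$ whose time-one map has positive metric entropy. With one degree of freedom every level curve of $H$ is invariant, every ergodic invariant measure is carried by a level curve or a critical point, and the time-one map has zero metric (indeed topological) entropy; so the slowing-down trick, while correct in its Abramov-type entropy scaling, has nothing to start from. This impossibility is precisely why the Berger--Turaev map is not generated by any flow (a point the present paper emphasizes) and why Theorem~\ref{BT} for $n=1$ is delicate. The paper does not reprove it: it invokes \cite{BT2} (Theorem~A) directly, obtaining coincidence with the identity near $\partial D^2$ from Avila's regularization (Corollary~5 of \cite{A2}, cf.\ Corollary~4.8 of \cite{BT2}). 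So the ``principal obstacle'' you defer is not merely deferred; the route you propose toward it is closed, and the boundary normalization in dimension two also needs the citation above rather than a generating-function cutoff.
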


\begin{proof}
Theorem A in \cite{BT2} is proven for $n=1$. 
Though they did not mention in the theorem whether the perturbed map agrees 
with the original map near the boundary,
Corollary 5 in \cite{A2} (see also Corollary 4.8 in \cite{BT2}) guarantees 
that they can coincide near $\partial D$.

To extend the result to $n\ge 2$, one can do the following.
Let $\varphi\co D^2\to D^2$ be the perturbation of the identity constructed for $n=1$
and $\{\varphi_t\}$ the family of symplectomorphisms constructed in Proposition \ref{p:family}
for $\varphi_0=\varphi$.
Define a diffeomorphism $\Phi\co D^2\times D^{2n-2}\to D^2\times D^{2n-2}$ by
$$
 \Phi(x,y) = (\varphi_{|y|}(x), y), \qquad x\in D^2, \ y\in D^{2n-2} .
$$
One easily sees that $ \Phi$ is a symplectomorphism fixing the neighborhood of the boundary.
Since $ \Phi(x,y)=\varphi(x)$ for all $y$ with $|y|\le\frac13$ and $\varphi$ has positive metric entropy,
so does~$\Phi$.
\end{proof}

Let $\Omega=(\Omega^{2n},\omega)$ be a symplectic manifold,
$H\co\Omega\to\R$ a Hamiltonian such that the flow $\{\Phi_H^t\}$
is completely integrable, and $\mathcal T$ a Liouville torus and $x_0\in\mathcal T$.
By the Liouville-Arnold theorem, there exist action-angle coordinates
$(\mathbf q,\mathbf p)=(q_1,...,q_n, p_1,..,p_n)$ near $\mathcal T$.

These coordinates identify a neighborhood of $\mathcal T$ with
the product $\T^n\times D$ where $D\subset\R^n$ is a small $n$-dimensional disc
and $\T^n=\R^n/\Z^n$ is the standard $n$-torus. 
The coordinates $p_1,\dots,p_n$ parametrize $D$ and $q_1,\dots,q_n$
are the standard angle coordinates on $\T^n$ (taking values in $\R/\Z$).
The Hamiltonian $H$ depends only on $\mathbf p$-coordinates,
hence it can be regarded as a function on $D$.

The flow $\Phi_H^t$ in these coordinates is governed by the equations
\be\label{e:action angle flow}
\begin{cases}
  \dot q_i = A_i(\mathbf p) := \frac{\pd H}{\pd p_i}(\mathbf p) \\
  \dot p_i = 0 .
\end{cases}
\ee
Thus, along every trajectory the $\mathbf p$-coordinates are constant
and $q$-coordinates vary linearly with velocity $A_i(\mathbf p)$, $i=1,\dots,n$.
We may assume that $\mathbf p=\mathbf 0$ on $\mathcal T$
and $H(\mathbf 0)=0$.

By a small perturbation of the function $H=H(\mathbf p)$ near $\mathbf p=\mathbf 0$
we can satisfy the following conditions:
\begin{itemize}
 \item The flow on $\mathcal T$ is nonvanishing. (This means that at least one of the numbers
 $\frac{\pd H}{\pd p_i}(\mathbf 0)$ is nonzero).
 \item The flow on $\mathcal T$ is periodic. (It suffices to perturb $H$ so that
 all numbers $\frac{\pd H}{\pd p_i}(\mathbf 0)$ are rational).
 \item The system is KAM-nondegenerate at $\mathcal T$. In our notation this condition means
 that the Hessian of $H$ at $\mathbf p=0$ is nondegenegate.
\end{itemize}
We change the coordinates by an action of some matrix from $SL(n,\mathbb{Z})$ on $\mathbb{T}^n$ 
to assure that the (periodic) flow on $\mathcal T$ is the flow along the $q_n$-coordinate,
that is, $A_i(\mathbf 0)=0$ for $i=1,\dots,n-1$ and $A_n(\mathbf 0)>0$.

After having made these modifications we 
abuse notation and use the same letter $H$ for the modified Hamiltonian
and $\mathbf p$, $\mathbf q$ for the modified coordinates.
It suffices to prove the theorem for Hamiltonians and coordinates
satisfying the conditions above.

For $h\in\R$, denote
$$
D_h:=\{\mathbf p\in D : H(\mathbf p)=h\} .
$$
Replacing $D$ by a smaller neighborhood of $\mathbf 0$ if necessary,
we apply the implicit function theorem 
(using the fact that $\frac{\pd H}{\pd p_n}(\mathbf 0) = A_n(\mathbf 0) > 0$)
and obtain a smooth family $\{f_h\}_{h\in\R}$ of smooth functions $f_h\co\R^{n-1}\to\R$
such that
\be\label{e:implicit0}
  \mathbf p \in D_h \quad\iff\quad p_n=-f_h(p_1,\dots,p_{n-1})
\ee
for every $h\in\R$ and $\mathbf p\in D$.
The minus sign here is introduced to be canceled out later
in \eqref{e:implicit-derivative}.
We introduce notation $\bbp$ and $\bbq$ for the coordinate $(n-1)$-tuples
$(p_1,\dots,p_{n-1})$ and $(q_1,\dots,q_{n-1})$.
With this notation \eqref{e:implicit0} implies that 
\be\label{e:implicit}
  H(\bbp, -f_h(\bbp)) = h
\ee
for all $\bbp\in\R^{n-1}$ sufficiently close to the origin and $h\in\R$ sufficiently close to~0.
Differentiating \eqref{e:implicit} with respect to $p_i$ we obtain that
\be\label{e:implicit-derivative}
\frac{\pd f_h}{\pd p_i}(\bbp)=\frac{A_i}{A_n}(\bbp, f_h(\bbp)) , \qquad i=1,...,n-1.
\ee
Since $A_i(\mathbf 0)=0$ for $i<n$, the origin is a critical point of $f_h$.

Now we cut our invariant tubular neighborhood of $\mathcal T$ by a hypersurface
$$
\Sigma = \{(\mathbf q,\mathbf p) : q_n = 0\}
$$ and consider the resulting
Poincar\'e return map $R=R_H\co\Sigma\to\Sigma$.
The hypersurface $\Sigma$ is naturally identified with $\T^{n-1}\times D$
and parametrized by coordinates $(\bbq,\mathbf p)$
where $\bbq=(q_1,\dots,q_{n-1})$ and $\mathbf p=(p_1,\dots,p_n)$.
By \eqref{e:action angle flow}, $R$ is given by
\be\label{e:R formula}
  R(\bbq,\mathbf p) = \left(q_1+\tfrac{A_1}{A_n}(\mathbf p),\dots,q_{n-1}+\tfrac{A_{n-1}}{A_n}(\mathbf p), \mathbf p \right) .
\ee
Note that the origin of $\Sigma$ is a fixed point of $R$.

The next lemma is one of the key ingredients of the proof.

\begin{lemma}\label{l:perturb section}
There exists a diffeomorphism $\widetilde R\co\Sigma\to\Sigma$ arbitrarily
close to $R$ in $C^\infty$ and such that $\widetilde R=R$
outside an arbitrarily small neighborhood of the origin and the following
conditions are satisfied:
\begin{enumerate}
 \item For every $h\in\R$, 
 $\widetilde R$ maps the level set $\Sigma^h:=\{x\in\Sigma : H(x)=h\}$ to itself
 and preserves the symplectic form on this set.
 \item There is a small $\widetilde R$-invariant neighborhood of the origin and
 the restriction of $\widetilde R$ to this neighborhood has positive metric entropy.
 Moreover, $\widetilde R$ is entropy non-expansive.
\end{enumerate}
\end{lemma}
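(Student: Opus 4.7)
The plan is to construct $\widetilde R$ slice-by-slice on the level sets $\Sigma^h$ in an $h$-smooth fashion, and then deduce positive metric entropy from Proposition~\ref{p: entropy via slices} applied to $\widetilde R$. First I would standardize the slices: parametrize $\Sigma^h$ near the origin by $(\bbq,\bbp)\in\T^{n-1}\times\R^{n-1}$ via the implicit relation $p_n=-f_h(\bbp)$ from~\eqref{e:implicit0}; since $dq_n=0$ on $\Sigma^h$, the ambient form $\omega$ restricts to the standard symplectic form $\sum_{i=1}^{n-1}dq_i\wedge dp_i$. By~\eqref{e:R formula} and~\eqref{e:implicit-derivative}, $R|_{\Sigma^h}$ becomes the integrable twist $T_h(\bbq,\bbp)=(\bbq+\nabla f_h(\bbp),\bbp)$, i.e.\ the time-$1$ map of the Hamiltonian $f_h$ on $T^*\T^{n-1}$. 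The KAM-nondegeneracy of $H$ translates into invertibility of $\mathrm{Hess}(f_h)$ at its critical point, and the implicit function theorem yields a smoothly varying family of critical points $\bbp^*(h)$ of $f_h$ with $\bbp^*(0)=\mathbf 0$.

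The construction then splits into two pieces. The first piece is to produce a $C^\infty$-small symplectic perturbation $R_1$ of $R$, supported in an arbitrarily small neighborhood of the origin, together with a smoothly varying family of symplectic discs $P_h\subset\Sigma^h$ around $\bbp^*(h)$ such that $R_1^{k}|_{P_h}=\mathrm{id}_{P_h}$ for some common integer $k$ --- the ``periodic spot''. Smooth $h$-dependence is arranged by a parametric Morse--Bott normal form for the family $f_h$ at $\bbp^*(h)$, giving symplectic charts in which $T_h$ has an $h$-independent model. The second piece is to insert Berger--Turaev: by Theorem~\ref{BT} there is a $C^\infty$-small symplectic perturbation $\varphi$ of the identity on $D^{2n-2}$, equal to the identity near the boundary and with positive metric entropy. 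Identifying $P_h\cong D^{2n-2}$ via the normal-form chart, I set $\widetilde R|_{\Sigma^h}$ so that $\widetilde R^{k}|_{P_h}$ equals the pull-back of $\varphi$, and distribute the correction across the $k$ iterates by means of Proposition~\ref{p:family} so that $\widetilde R$ itself stays $C^\infty$-close to $R_1$. By construction $\widetilde R$ is symplectic on each $\Sigma^h$, equals $R$ outside the chosen small neighborhood of the origin, leaves $\bigcup_h P_h$ invariant with positive slicewise metric entropy, and therefore has positive metric entropy on this set by (the obvious extension of) Proposition~\ref{p: entropy via slices}; entropy non-expansiveness is automatic because everything happens in an arbitrarily thin tube around the periodic orbit through $y_0$.

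The main obstacle is the periodic-spot construction. The naive idea --- truncate $f_h$ and replace it by an affine function on a ball of radius $r$ around $\bbp^*(h)$ so that $\nabla f_h$ becomes constant and rational of denominator $k$ there --- fails in $C^\infty$: the modification is $O(r^2)$ in $C^0$, but derivatives of the cutoff scale like $r^{-j}$, so the $C^j$ norm grows as $r^{2-j}$ and is not small for $j\ge 3$. Overcoming this is the ``seemingly non-trivial construction'' advertised in the introduction, and it is where essentially all of the technical difficulty lies. A reasonable strategy is to correct not $R$ itself but a high iterate $R^{\ell}$ in coordinates rescaled to the scale of $\mathrm{Hess}(f_h)$, where the obstruction reduces to a controlled linear symplectic model problem, and then realize the resulting symplectomorphism as $R_1$ via a small Hamiltonian vector field on a tube whose transverse scale is matched to $\ell$; this trades the bad $r^{-j}$ factor for a tame one coming from the iteration. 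Once a periodic spot is in hand, pasting in Berger--Turaev and slicewise integration is routine.
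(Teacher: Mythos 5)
Your overall architecture coincides with the paper's: parametrize $\Sigma^h$ by $(\bbq,\bbp)$ via $p_n=-f_h(\bbp)$, conjugate $R|_{\Sigma^h}$ to the twist $G_h(\bbq,\bbp)=(\bbq+\nabla f_h(\bbp),\bbp)$, i.e.\ the time-$1$ map of $F_h(\bbq,\bbp)=f_h(\bbp)$, use a parametric Morse--Bott normal form at the critical point $\bbc(h)$, insert Theorem \ref{BT}, use Proposition \ref{p:family} to organize the $h$-dependence, and integrate entropy over slices. But the heart of the lemma --- the creation of the periodic spot --- is precisely the step you do not prove. You correctly diagnose why flattening $f_h$ to an affine function with rational gradient fails in $C^\infty$, but your replacement (``correct a high iterate $R^\ell$ in rescaled coordinates \dots realize the resulting symplectomorphism as $R_1$ via a small Hamiltonian vector field'') is only a hope: you give no argument that a correction of $R^\ell$ can be pushed back to a $C^\infty$-small correction of $R$ itself, nor that the rescaling actually tames the loss of derivatives, nor that the outcome is an exactly $k$-periodic invariant disc rather than an approximately periodic one. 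The paper's mechanism is different and elementary: in the Morse--Bott chart where $F_h=\const+P_1^2+\cdots+P_k^2-P_{k+1}^2-\cdots-P_m^2$, one adds the cut-off quadratic term $\ep\,\xi(\bar{\mathbf P}^2)\,\xi(\bar{\mathbf Q}^2)\,(Q_1^2+\cdots+Q_k^2-Q_{k+1}^2-\cdots-Q_m^2)$ as in \eqref{e:F_h,ep,delta}; inside $U_{\delta/2}$ the flow becomes a signed harmonic oscillator, hence periodic of period $\pi/\sqrt{\ep}$ with a fixed point, and choosing $\ep$ so that $N=\pi/\sqrt{\ep}\in\N$ makes the time-$1$ map $N$-periodic on an invariant neighborhood, while for fixed $\delta$ the perturbation is $O(\ep)$ in every $C^j$ norm. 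Without this idea (or a genuine substitute), your $R_1$ and the family of discs $P_h$ simply have not been constructed.

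Two further points. First, the insertion of the entropy map: the paper does not need to spread $\varphi$ over the $k$ iterates; it chooses a disc $B$ with $B,G_{h,\ep,\delta}(B),\dots,G_{h,\ep,\delta}^{N-1}(B)$ disjoint and sets $\widetilde G_h=G_{h,\ep,\delta}\circ\theta$ with $\theta$ supported in $B$, so that $\widetilde G_h^N|_B=\theta$; your use of Proposition \ref{p:family} for this purpose could be made to work but is an unnecessary complication (the paper uses Proposition \ref{p:family} only to interpolate $\theta_h$ in $h$). Second, and more seriously, entropy non-expansiveness is \emph{not} automatic from the construction being localized: once the perturbed map is fixed, a sufficiently small tubular neighborhood of the central periodic orbit no longer contains your single disc $B$, so you cannot conclude that entropy is generated in arbitrarily small neighborhoods. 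The paper handles this by inserting Berger--Turaev perturbations on a sequence of discs $B_i$ shrinking to the fixed point of $G_{h,\ep,\delta}$, with all iterates disjoint, so that every neighborhood of that orbit contains some $B_i$ carrying positive entropy; your proposal omits this and the corresponding claim is unjustified.
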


\begin{remark}
In order to speak about metric entropy of $\widetilde R$, we regard $\Sigma$
with the measure induced by the symplectic volume on $\Omega$ and the
original flow $\Phi_H^t$, see \eqref{e:sliced volume}.
One easily sees that any map $\widetilde R$ satisfying the first requirement
of the lemma preserves this measure.
\end{remark}

\begin{proof}
Our nondegeneracy assumption on the Hessian of $H(\mathbf p)$
implies that $\bbz\in\R^{n-1}$ is a nondegenerate critical point of~$f_0$.
Thus for all $h$ near $0$, the function $f_h$ has a nondegenerate
critical point $\bbc(h) = (c_1(h),\dots,c_{n-1}(h))$ depending smoothly on $h$
with $\bbc(0)=\bbz$.
 
First we fix $h\in\R$ sufficiently close to 0 and describe the construction
within~$\Sigma^h$.
By \eqref{e:implicit0}, the intersection of $\Sigma^h$ with a suitable neighborhood
of the origin is parametrized by a map 
$$
\Gamma_h \co O_q\times O_p \to \Sigma 
$$
given by
$$
 \Gamma_h(\bbq,\bbp) = (\bbq,\bbp, -f_h(\bbp))
$$
where $O_q$ and $O_p$ are certain neighborhoods of the origin in $\R^{n-1}$.
Here in the right-hand side we use the coordinates $(\bbq,\mathbf p)=(\bbq,\bbp,p_n)$
on $\Sigma$. 
Since $O_p$ comes from the implicit function theorem,
it can be chosen uniformly in $h$. Hence we may assume that $\bbc(h)\in O_p$.
Observe that $\Gamma_h$ preserves the symplectic form,
where $O_q\times O_p\subset\R^{2n-2}$ is equipped with the standard symplectic form 
$d\bbq\wedge d\bbp=\sum_{i=1}^{n-1} dq_i\wedge dp_i$.
Therefore the restriction of $R$ to the set $\Gamma_h(O_q\times O_p)\subset\Sigma^h$
is conjugate to a symplectic map $G_h\co O_q\times O_p\to\R^{2n-2}$,
\be\label{e:conjugation}
  G_h = \Pi\circ R\circ \Gamma_h
\ee
where $\Pi$ is the standard projection forgetting the last coordinate.
For brevity, we define $m=n-1$.

We are going to perturb $G_h$ so that the resulting map 
$\widetilde G_h\co O_q\times O_p\to\R^{2m}$ is still symplectic,
it coincides with $G_h$ outside a compact subset of $O_q\times O_p$,
has an invariant neighborhood of $(\bbz, \bbc(h))$ and positive metric entropy
in this neighborhood.

By \eqref{e:R formula} and \eqref{e:implicit-derivative}, $G_h$ can be written in the form
\be\label{e:G_h formula} 
  G_h(\bbq,\bbp) = (\bbq + \nabla f_h(\bbp), \bbp)
\ee
where $\nabla f_h$ is the Euclidean gradient of $f_h\co\R^m\to\R$.
Notice that $G_h$ is the time-1 map of the Hamiltonian flow 
$\Phi^t_{F_h}$ with the Hamiltonian $F_h$ given by
\be\label{e:F_h}
 F_h(\bbq,\bbp):=f_h(\bbp)
\ee
Our plan is to perturb $F_h$ and define $\widetilde G_h$
as the time-1 map of the flow defined by the perturbed Hamiltonian.

Since $\bbc(h)$ is a nondegenerate critical point of $f_h$, 
by Morse Lemma there exist a coordinate chart 
$\bar{\textbf{P}}=(P_1,\dots,P_m)$ in $O_p$
such that $\bar{\textbf{P}}$ vanishes at $\bbc(h)$ and 
\be\label{e:morse}
f_h=f_h(\bbc(h))+P_1^2+\cdots+P_k^2-P_{k+1}^2-\cdots-P_m^2
\ee
in a neighborhood of $\bbc(h)$.
We regard $P_1,\dots,P_m$ as functions on $O_q\times Q_p$
by setting $P_i(\bbq,\bbp)=P_i(\bbp)$.
Then \eqref{e:morse} is a formula for $F_h$ as well, cf.~\eqref{e:F_h}.
Since $P_1,\dots,P_m$ depend only on $\bbp$-coordinates,
they are Poisson commuting.
Hence, by Theorem \ref{CJL}, one can extend this collection
of functions to a symplectic coordinate system 
$(\bar{\mathbf Q},\bar{\mathbf P})=(Q_1,\dots,Q_m,P_1,\dots,P_m)$
in a neighborhood of the point $(\bbq,\bbp)=(\bbz,\bbc(h))$ in $O_q\times O_p$.
We may assume that $Q_1,...,Q_m$ vanish at $(\bbz, \bbc(h))$.

We perturb the Hamiltonian $F_h$ in a region
$U_{\delta}:=\{\bar{\mathbf P}^2<\delta,\bar{\mathbf Q}^2<\delta\}$ 
where $\delta$ is a sufficiently small positive number.
Let $\xi$ be a smooth function on $[0,1]$ with $\xi\equiv 1$ on $[0,\delta/2]$ and $\xi\equiv 0$ on $[\delta,1]$. 
For any $\ep>0$, define a perturbed Hamiltonian $F_{h,\ep,\delta}$ by
\be\label{e:F_h,ep,delta}
F_{h,\ep,\delta}
:=F_h + \ep\,\xi(\bar{\mathbf P}^2)\,\xi(\bar{\mathbf Q}^2)\,(Q_1^2+\cdots+Q_k^2-Q_{k+1}^2-\cdots-Q_m^2). 
\ee
Due to the formula \eqref{e:morse} for $F_h$, the Hamiltonian flow 
$\Phi^t_{F_{h,\ep,\delta}}$ 
within $U_{\delta/2}$ is governed by the following equations
in coordinates $(\bar{\mathbf Q},\bar{\mathbf P})$:
$$
\begin{cases}
 \dot Q_i = 2P_i , & i \le k, \\
 \dot P_i = -2\ep Q_i , & i \le k, \\
 \dot Q_i = -2P_i , & i > k, \\
 \dot P_i = 2\ep Q_i , & i > k .
\end{cases}
$$
This defines a periodic flow with period $\pi/\sqrt\ep$ and a fixed point at $\bar{\mathbf P}=\bar{\mathbf Q}=\bar{\mathbf 0}$.
Outside $U_\delta$, the flow $\Phi^t_{F_{h,\ep,\delta}}$ coincides with the original flow $\Phi^t_{F_{h}}$.
We assume that $\delta$ is so small that the trajectories of the flow $\Phi^t_{F_{h}}$ starting in $U_\delta$
stay within the domain of $(\bar{\mathbf Q},\bar{\mathbf P})$ for all $t\in [0,1]$.
Then the same property holds for the flow $\Phi^t_{F_{h,\ep,\delta}}$.

We choose $\ep$ so that $N:=\pi/\sqrt{\ep}$ is an integer.
Define
\be\label{e:G_h,ep,delta}
G_{h,\ep, \delta}:=\Phi^1_{F_{h,\ep, \delta}} ,
\ee
the time-1 map of the flow determined by the Hamiltonian $F_{h,\epsilon, \delta}$.
This map is defined on an open subset of $O_q\times O_p$ containing the closure
of $U_\delta$ (provided that $\delta$ is sufficiently small),
and it tends to $G_h$ in $C^\infty$ as $\ep\to 0$ (for each fixed $\delta$).
The disc $D_{\delta/2}$ is invariant under $G_{h,\ep, \delta}$ and the restriction
of $G_{h,\ep, \delta}$ to this disc is $N$-periodic.

Choose a closed disc $B\subset U_{\delta/2}$ such that the sets
$B, G_{h,\ep, \delta}(B),\dots, G_{h,\ep, \delta}^{N-1}(B)$ 
are disjoint.
This disc is just a sufficiently small ball centered at a non-fixed point
of $G_{h,\ep, \delta}$.
By Theorem \ref{BT}, there exist a symplectomorphism $\theta\co B\to B$
arbitrarily $C^\infty$-close to the identity fixing a neighborhood of the boundary
and having positive metric entropy.
We extend $\theta$ to the whole $\R^{2m}$ by the identity map outside $B$
and use the same letter $\theta$ for its extension to $\R^{2m}$.

Now define 
\be\label{e:tilde G_h}
\widetilde G_h = G_{h,\ep, \delta} \circ \theta .
\ee
This formula defines $\widetilde G_h$ in a neighborhood of the closure of $U_\delta$.
Outside $U_\delta$ this map coincides with $G_h$ and we extend it by $G_h$ to obtain
a map $\widetilde G_h\co O_q\times O_p\to\R^{2m}$.
By construction, $U_{\delta/2}$ is invariant under $\widetilde G_h$,
$B$ is invariant under $\widetilde G_h^N$, and
$
 (\widetilde G_h^N)|_B = \theta|_B 
$.
Therefore the restriction of $\widetilde G_h$ to $U_{\delta/2}$ has positive metric entropy.
By choosing $\ep$ sufficiently small and $\theta$ sufficiently close to the identity,
$\widetilde G_h$ can be made arbitrarily close to $G_h$ in the $C^\infty$ topology.

In order to make the perturbed map entropy non-expansive, the construction
can be modified as follows. Instead of working with one disc $B$, we choose a sequence of disks
$\{B_i\}$ tending to the origin, with diameters going to 0, and such that the sets
$G_{h,\ep,\delta}^k(B_i)$, $i\in\N$, $k=0,\dots,N-1$, are disjoint.
We perturb the identity map within each $B_i$ as in Theorem~\ref{BT}
so that the composition of these perturbations
is a $C^\infty$ map
$\theta\co\R^{2m}\to\R^{2m}$ which is close to the identity. 
Then the map $\widetilde G_h$ defined by \eqref{e:tilde G_h}
is entropy non-expansive.

By the conjugation inverse to \eqref{e:conjugation} we transform
$\widetilde G_h$ to a perturbation $\widetilde R_h$ of $R_h=R|_{\Sigma^h}$.
Namely
\be\label{e:tilde R_h}
 \widetilde R_h = \Gamma_h\circ\widetilde G_h \circ\Pi|_{\Sigma^h}
\ee
within the coordinate domain $\Gamma_h(O_q\times O_p)$ and
$\widetilde R_h$ coincides with $R_h$ outside this domain.
This finishes the description of the construction within one level set.

It remains to show that one can apply the construction simultaneously on all
level sets $\Sigma^h$, $h\in\R$, so that the union of the resulting maps
$\widetilde R_h$ is a diffeomorphism $\widetilde R\co\Sigma\to\Sigma$
satisfying the requirements of the lemma.

In order to do this, we first construct coordinates 
$(\bar{\mathbf Q},\bar{\mathbf P})=(\bar{\mathbf Q}^h,\bar{\mathbf P}^h)$
as above for all $h$ from a neighborhood of~0 so that they depend smoothly on~$h$.
The $\bar{\mathbf P}$-coordinates are constructed using the Morse Lemma.
In order to make sure that they depend smoothly on~$h$, one can apply
the Morse-Bott Lemma (a.k.a.\ the parametric Morse Lemma), see e.g.\ \cite[Theorem 2]{BH}.
More precisely, to obtain a smooth family $\{f_h\}$ of functions
satisfying \eqref{e:morse}, one applies the Morse-Bott Lemma to the function
$
 (\bbp, h) \mapsto f_h(\bbp) - f_h(\bbc(h))
$
defined in a neighborhood of~0 in $\R^{n-1}\times\R$.
The $\bar{\mathbf Q}$-coordinates are constructed from $\bar{\mathbf P}$-coordinates
by means of Theorem~\ref{CJL}. 
By analyzing the proof of Theorem \ref{CJL} in \cite{LM}, 
one can see that this construction boils down to explicit formulae
involving algebraic computations and solutions of O.D.E.s,
hence it can be made smooth in~$h$ in a suitable neighborhood.

Having constructed the $(\bar{\mathbf Q}^h,\bar{\mathbf P}^h)$-coordinates
for all $h\in(-h_0,h_0)$,
we define $F_{h,\ep,\delta}$ by \eqref{e:F_h,ep,delta}
using a small fixed $\delta$ and $\ep=\ep(h)$ such that
$\ep(h)$ is a small constant for $|h|<h_0/3$ and $\ep(h)=0$ for $|h|>2h_0/3$.
Then define $\widetilde G_h$ by   \eqref{e:G_h,ep,delta} and \eqref{e:tilde G_h}
using $\theta=\theta_h$ depending on $h$ as follows: $\{\theta_h\}$ is a smooth family
$C^\infty$-close to a constant one, $\theta_h$ is a fixed map $\theta$ as above for $|h|<h_0/3$,
and $\theta_h=\operatorname{id}$ for $|h|>2h_0/3$.
The existence of such a family is guaranteed by Proposition \ref{p:family}.
Finally, define $\widetilde R_h$ by \eqref{e:tilde R_h}.
The union of maps $\widetilde R_h$ forms a self-diffeomorphism
of the set $\{x\in\Sigma : H(x)\in(-h_0,h_0)\}$.
By construction, this diffeomorphism coincides with $R$ 
on the set of $x$ such that $|H(x)|\in(2h_0/3,h_0)$.
We extend it to a diffeomorphism $\widetilde R\co\Sigma\to\Sigma$
by setting $\widetilde R=R$ on the remaining part of~$\Sigma$.

The resulting map $\widetilde R$ has an invariant neighborhood
$\{ \mathbf P^2 < \delta/2, \mathbf Q^2 < \delta/2, |H|<h_0/3 \}$.
Since the maps $\widetilde R_h$, $h\in(-h_0/3,h_0/3)$,
have the coordinate representation and they have positive metric entropy
and are entropy non-expansive, the restriction of $\widetilde R$ 
to the above neighborhood has
a positive metric entropy.
\end{proof}

Now Theorem \ref{thm2} follows from Lemma \ref{l:perturb section}
and Proposition \ref{perturb all levels}.

\section{Some open problems} \label{open}

Here we briefly discuss a few open problems, some of them are mentioned above. 
\begin{enumerate}
\item In case of the geodesic flow on a Riemannian manifold, we do not know how to make the 
perturbation Riemannian. This seems to be quite an intriguing problem.

\item How large entropy can be generated depending on the size of perturbation (any estimate would certainly involve some characteristics of the unperturbed system)? Probably some (very non-sharp) lower bounds
can be obtained by a careful analysis of the proof. As for the upper bounds, we suspect they
should be double-exponential like Nekhoroshev estimates.

\item  Our construction is very specific and non-generic. What about a generic perturbation?
\end{enumerate}


\begin{thebibliography}{99}


\bibitem{Ab2}
L. Abramov,
{\it On the entropy of a flow} (Russian),
Dokl. Akad. Nauk SSSR {\bf 128} (1959) 873--875. 

\bibitem{AbR}
L. M. Abramov and V. A. Rokhlin, {\it The entropy of a skew product of measure-preserving transformations}, Amer. Math. Soc. Transl. Ser. 2, {\bf 48} (1966), 255--265.

\bibitem{Ar1} 
V. Arnold, \textit{Proof of a Theorem by A. N. Kolmogorov on the invariance of quasi-periodic motions 
under small perturbations of the Hamiltonian,} Russian Math. Survey 18 (1963), 13--40.

\bibitem{Ar2} V. Arnold, \textit{Instabilities in dynamical systems with several degrees of freedom}, Sov. Math. Dokl. 5 (1964), 581–-585.

\bibitem{A2} A. Avila. \textit{On the regularization of conservative maps,} Acta Math., 205(1):5--18, 2010.

\bibitem{BH} A. Banyaga, D. E. Hurtubise, \textit{A proof of the Morse-Bott Lemma},
Expo. Math. {\bf 22} (2004), no. 4, 365--373.

\bibitem{BT2} P. Berger, D. Turaev, \textit{On Herman’s Positive Entropy Conjecture}, Adv. Math. 349 (2019), 1234–-1288.

\bibitem{BT1} A. Bolsinov, I. Taimanov, \textit{Integrable geodesic flows on the suspensions of toric automorphisms,} 
Proceedings of the Steklov Institute of Math. {\bf 231} (2000), 42--58.

\bibitem{B} R. Bowen, \textit{Extropy-expansive maps}, Trans. Amer. Math. Soc. 164 (1972), 323-331.


\bibitem{Bu} D. Burago, \textit{A new approach to the computation of the entropy of geodesic 
flow and similar dynamical systems}, 
Soviet Math. Doklady 37 (1988), 1041--1044.

\bibitem{BI} 
D. Burago, S.  Ivanov. \textit{Boundary distance, lens maps and entropy of geodesic flows of Finsler metrics},  
Geometry \& Topology, \textbf{20} (2016), 469--490.

\bibitem{CdS}
A. Cannas da Silva,
\textit{Lectures on symplectic geometry},
Lecture Notes in Mathematics, 1764. Springer-Verlag, Berlin, 2001. xii+217 pp.


\bibitem{Ch} D. Chen, \textit{Positive metric entropy arises in some nondegenerate nearly integrable systems}, 
Journal of Modern Dynamics, Volume 11 (2017), 43-56.

\bibitem{CHPZ21} J. Chen, H. Hu, Y. Pesin, K. Zhang,  \textit{The essential coexistence phenomenon in Hamiltonian dynamics},  Ergodic Theory and Dynamical Systems, 1-22. doi:10.1017/etds.2021.13.

\bibitem{C} 
C. Contreras, \textit{Geodesic flows with positive topological entropy, twist map and hyperbolicity}, 
Annals of Mathematics, Second Series, Vol. 172, No. 2 (2010), 761-808.

\bibitem{Do} 
R. Douady, Stabilit\'{e} ou instabilit\'{e} des point fixes elliptiques, Ann. Sci. \'{E}cole Norm. Sup. (4), 21 (1988), 1-–46.


\bibitem{GT10}V. Gelfreich, D. Turaev, \textit{Universal dynamics in a neighborhood of a generic elliptic periodic point}, Regul. Chaotic Dyn. 15 (2010), no. 2-3, 159–164.

\bibitem{GT17}V. Gelfreich, D. Turaev,  \textit{On three types of dynamics and the notion of attractor}, Proc. Steklov Inst. Math. 297 (2017), no. 1, 116–137.

\bibitem{K} 
A. Kolmogorov, \textit{On the conservation of conditionally periodic motions under small perturbation of the Hamiltonian},
Dokl. Akad. Nauk SSSR 98 (1954), 525-530.

\bibitem{KH} 
A. Katok, B. Hasselblatt, \textit{Introduction to the Modern Theory of Dynamical Systems},  
Vol. 54. Cambridge University Press (1997).

\bibitem{KW} 
G. Knieper, H. Weiss, \textit{A surface with positive curvature and positive topological entropy}, 
J. Differential Geom. 39 (1994), no. 2, 229-249.

\bibitem{LM} 
P. Libermann, C.-M. Marle, \textit{Symplectic Geometry and Analytical Mechanics}, 
Vol. 35, Springer Science and Business Media (2012).

\bibitem{M} 
J. Moser, \textit{On invariant curves of area-preserving mappings of an annulus},
Nach. Akad. Wiss. G\"ottingen, Math. Phys. Kl. II 1 (1962), 1-20.


\bibitem{Ne} N. Nekhoroshev, \textit{The behavior of Hamiltonian systems that are close to integrable ones} (Russian), Funkcional. Anal. i Prilozen. 5 (1971), no. 4, 82–-83.

\bibitem{N1} 
S. Newhouse, \textit{Quasi-elliptic periodic points in conservative dynamical systems}, 
American Journal of Mathematics 99.5 (1977), 1061-1087.

\bibitem{N2} S. Newhouse, \textit{Continuity Properties of Entropy},  Annals of Mathematics, 
Second Series, Vol. 129, No. 1 (1989), 215-235.


\bibitem{Petersen}
K. Petersen, 
\textit{Ergodic theory}. Cambridge University Press, Cambridge, 1983.


\bibitem{T10}D. Turaev, \textit{Richness of chaos in the absolute Newhouse domain}, Proceedings of the International Congress of Mathematicians. Volume III, 1804–1815, Hindustan Book Agency, New Delhi, 2010.


\end{thebibliography}
\end{document}